\documentclass[reqno,11pt]{article}

\usepackage[normalem]{ulem}

\usepackage{amsmath,amssymb,color,amsthm}
\usepackage{amsfonts, amscd, epsfig, amsmath, amssymb,enumerate}
\usepackage{graphicx}
\usepackage{graphics}
\usepackage{color}
\usepackage{mathrsfs}
\usepackage{todonotes}
\usepackage{soul}
\usepackage{authblk}

 \usepackage[usenames,dvipsnames]{pstricks}
 \usepackage{pstricks-add}
 \usepackage{epsfig}
 \usepackage{pst-grad} 
 \usepackage{pst-plot} 
 \usepackage[space]{grffile} 
 \usepackage{etoolbox} 
\usepackage[margin=3cm]{geometry}
 \makeatletter 
 \patchcmd\Gread@eps{\@inputcheck#1 }{\@inputcheck"#1"\relax}{}{}
 \makeatother

\newtheorem{theorem}{Theorem}[section]
\newtheorem{lemma}[theorem]{Lemma}

\newtheorem{corollary}[theorem]{Corollary}
\newtheorem{remark}[theorem]{Remark}

\newtheorem{assumption}{Assumption}[section]

\usepackage{tikz}
\usetikzlibrary{backgrounds}
\usetikzlibrary{patterns,fadings}
\usetikzlibrary{arrows,decorations.pathmorphing}
\usetikzlibrary{decorations}
\usetikzlibrary{calc}
\usetikzlibrary{shapes.misc}

\usepackage{setspace}

\setstretch{1.05}

\definecolor{light-gray}{gray}{0.95}
\usepackage{float}
\usepackage[colorlinks=true,linkcolor=blue,citecolor=magenta]{hyperref}
\def\centerarc[#1](#2)(#3:#4:#5){\draw[#1] ($(#2)+({#5*cos(#3)},{#5*sin(#3)})$) arc (#3:#4:#5);}

\usepackage{titlesec}
\titleformat{\subsection}[runin]{\it}{\thesubsection}{4 pt}{ \it}[]
\titleformat*{\section}{\bf \center}

\numberwithin{equation}{section}
\numberwithin{figure}{section}


\newcommand{\<}{\left\langle}
\renewcommand{\>}{\right\rangle}

\renewcommand{\epsilon}{\varepsilon}

\newcommand{\R}{\mathbb R}
\newcommand{\Z}{\mathbb Z}
\newcommand{\N}{\mathbb N}
\renewcommand{\P}{\mathbb P}
\newcommand{\T}{\mathbb T}
\newcommand{\E}{\mathbb E}

\newcommand{\gen}{\mathscr{L}_{N}}

\newcommand{\Ocal}{\mathcal O}

\newcommand{\CS}{Cauchy-Schwarz inequality}

\allowdisplaybreaks 

\title{Equilibrium Perturbations for Asymmetric Zero Range Process under Diffusive Scaling in Dimensions $d \geq 2$}

\date{}

\author{Linjie Zhao}

\begin{document}

\maketitle

\begin{abstract}
We consider the asymmetric zero range process in dimensions $d \geq 2$.  Assume the initial density profile is a perturbation of the constant density, which has order $N^{-\alpha}$, $\alpha \in (0,1)$, and is constant along the drift direction.  Here, $N$ is the scaling parameter. We show that under some constraints on the jump rate of the zero range process, the perturbed quantity macroscopically obeys the heat equation under diffusive scaling.\\
	
\noindent \emph{Keywords:}  asymmetric zero range process; diffusive scaling; spectral gap estimate; logarithmic Sobolev inequality.
\end{abstract}

\section{Introduction}

It is well known that for asymmetric interacting particle systems with only one conservation law, such as asymmetric zero range or exclusion processes, the macroscopic density profile obeys the hyperbolic equation under hyperbolic scaling, \emph{i.e.}  time sped up by $N$ and space divided by $N$ \cite{rezakhanlou91}.  In order to understand Navier-Stokes equations from a microscopic point of view, asymmetric interacting particle systems have also been considered under diffusive scaling, \emph{i.e.}  time sped up by $N^2$ and space divided by $N$. In the seminal paper \cite{esposito1994diffusive}, Esposito, Marra and Yau prove that for asymmetric exclusion processes in dimensions $d \geq 3$, if the initial density profile is a perturbation of order $N^{-1}$ with respect to the constant density, then the perturbed quantity evolves according to a parabolic equation under diffusive scaling.  In the literature this is called the incompressible limit, which  has also been extended to boundary driven asymmetric exclusion process in \cite{benois2002hydrodynamics}. Another interpretation of the Navier-Stokes equation is to describe the evolution of system in the hyperplane orthogonal to the drift. Precisely speaking, in \cite{benois1997diffusive,landim2004hydrodynamic}, it has been proven that for asymmetric zero range and exclusion processes, if the initial density profile is constant along the drift direction, then the macroscopic behavior is described by a parabolic equation under diffusive scaling. The third interpretation is to consider the first order correction to the hydrodynamic equation, which however is under  hyperbolic scaling. We refer to \cite[P. 185]{klscaling} for more background and references on understanding of Navier-Stokes equations.  

In this note, we consider equilibrium perturbations for the asymmetric zero range process under diffusive scaling in dimensions $d \geq 2$.  We first recall the result in \cite{benois1997diffusive}.  Consider the zero range process $(\eta_t, t \geq 0)$ with jump rate $N^2 g(\cdot)$ and transition probability $p(\cdot)$ such that $m:= \sum_{x \in \T_N^d} x p(x) \neq 0$.  Here, $N$ is the scaling parameter. Assume the initial distribution of the process is associated to some  density profile $\varrho_0: \T^d \rightarrow \R_+$ such that $m \cdot \nabla \varrho_0 (u) = 0$ for any $u \in \T^d$. Then, it is proved in \cite{benois1997diffusive} that for any continuous function $F: \T^d \rightarrow \R$,
\[\lim_{N \rightarrow \infty} \frac{1}{N^d} \sum_{x \in \T_N^d} \eta_{t} (x) F (\tfrac{x}{N}) = \int_{\T^d} \varrho (t,u) F(u) du \quad \text{in probability},\]
where $\varrho (t,u)$ is the solution to the following parabolic equation
\begin{equation}\label{parabolicEqn}
	\begin{cases}
	\partial_t \varrho (t,u) = \frac{1}{2} \sum_{i,j=1}^d \sigma_{i,j} \partial^2_{u_i,u_j} \Phi (\varrho (t,u)),\\
	\varrho (0,u) = \varrho_0 (u).
\end{cases}\end{equation}
Above, $\sigma_{i,j} = \sum_{x \in \T_N^d} x_i x_j p(x)$ for $1 \leq i,j \leq d$ and $\Phi (\varrho)$ is the expectation of the jump rate $g(\cdot)$ with respect to the invariant measure of the process with density $\varrho$.

We assume the initial density profile has the following form: fix a positive constant $\rho_*$ and for $\alpha \in (0,1)$, let
\[\varrho_0 (u)= \rho_* + N^{-\alpha} \rho_0 (u).\]
Assume further that $m \cdot \nabla \rho_0 (u) = 0$ for any $u \in \T^d$.  Then, under some constraints on the jump rate $g(\cdot)$, we prove that for any continuous function $F: \T^d \rightarrow \R$,
\[\lim_{N \rightarrow \infty} \frac{1}{N^{d-\alpha}} \sum_{x \in \T_N^d}\big( \eta_{t} (x) - \rho_*\big) F (\tfrac{x}{N}) = \int_{\T^d} \rho (t,u) F(u) du \quad \text{in probability},\]
where $\rho (t,u)$ is the solution to the following heat equation
\begin{equation}\label{hE}
\begin{cases}
	\partial_t \rho (t,u) = \frac{1}{2} \sum_{i,j=1}^d \sigma_{i,j} \Phi^\prime (\rho_*) \partial^2_{u_i,u_j} \rho (t,u),\\
	\rho (0,u) = \rho_0 (u).
\end{cases}
\end{equation}
Note that if we assume a priori that the density profile of the process at time $t$ is given by \[\varrho (t,u) := \rho_* + N^{-\alpha} \rho(t,u)\] and substitute $\varrho (t,u)$ into the parabolic equation \eqref{parabolicEqn}, then we get the above  heat equation by using Taylor's expansion and by letting $N \rightarrow \infty$.

The proof is based on relative entropy method introduced by Yau \cite{yau1991relative}.  Precisely speaking, let $\nu_{N,t}$ be the product measure with macroscopic density profile $\rho_* + N^{-\alpha} \rho (t,u)$, and let $\mu_{N,t}$ be the distribution of the process at time $t$. We prove in Theorem \ref{thm} that per volume of the relative entropy of $\mu_{N,t}$ with respect to $\nu_{N,t}$ is of order $o(N^{-2\alpha})$. From this and entropy inequality, it is easy to obtain law of large numbers for the perturbed quantities. The main step to bound the relative entropy is to prove a quantitative version of the so-called one block estimate, for which we present two different proofs: one uses spectral gap estimate and the other uses logarithmic Sobolev inequality. The proof based on spectral gap estimate follows the steps in \cite{toth2002between}.   Logarithmic Sobolev inequality has also been used to quantify block estimates in the theory of hydrodynamic limit, see \cite{fritz2004derivation} for example.

Equilibrium perturbations from equilibrium have also been considered in other contexts. For example, in dimension one, Sepp{\"a}l{\" a}inen \cite{seppalainen2001perturbation} considers the  Hammersley’s model, adds a perturbation of order $N^{-\alpha}$ to the equilibrium,  and shows that the perturbation macroscopically obeys the invisid Burgers equation in the time scale $N^{1+\alpha}t$ if $0 < \alpha< 1/2$, which  is extended by T{\'o}th and Valk{\'o} \cite{toth2002between} to a large class of one-dimensional interacting particle systems based on relative entropy method, but only for  $0 < \alpha < 1/5$ and only in the smooth regime of the solution.  For systems with two conservation laws,
T{\'o}th and Valk{\'o} \cite{toth2005perturbation} obtain a two-by-two system  for a very rich class of systems. In \cite{valko2006hydrodynamic}, Valk{\'o} shows that small perturbations around a hyperbolic equilibrium point evolve according to two decoupled Burgers equations. For very rencent results, we refer to \cite{jara2021viscous} for equilibrium perturbations in weakly asymmetric exclusion processes, and to \cite{xu2022equilibrium} for the generalized exclusion process and anharmonic chains.

The rest of the note is organized as follows. In Section \ref{sec:results}, we state the model and main results.  The proof of Theorem \ref{thm} is presented in Section \ref{sec:relativeentropy} by assuming  the one block estimate holds, whose proof is postponed to Section \ref{sec:oneblock}.

\section{Notation and Results}\label{sec:results}

The state space of the zero range process is $\Omega_N^d = (\T_N^d)^{\N}$, where $\N = \{0,1,2,\ldots\}$ and $\T_N^d = \Z^d / (N\Z^d)$ is the $d$ dimensional discrete torus with $d \geq 2$.  For a configuration $\eta \in \Omega_N^d$, $\eta(x)$ is the number of particles at site $x$. Let $p (\cdot)$ be a probability measure on $\Z^d$. We assume that  
\begin{enumerate}[(i)]
	\item $p(\cdot)$ is of finite range: there exists $R > 0$ such that $p(x) = 0$ for all $|x| > R$;
	\item  $p(\cdot)$ is asymmetric: $m := \sum_{x \in\T_N^d} x p(x) \neq 0$.
\end{enumerate}
Above, $|x| = \max_{1 \leq i \leq d} |x_i|$ for $x \in \Z^d$. Let $g: \N \rightarrow \R_+$ be the jump rate of the zero range process.  To avoid degeneracy, assume  $g(k) = 0$ if and only if $k=0$.
The generator of the zero range process acting on functions $f: \Omega_N^d \rightarrow \R$ is given by
\[\gen f (\eta) = \sum_{x,y\in \T_N^d} g(\eta(x)) p(y) \big[ f(\eta^{x,x+y}) - f(\eta) \big].\]
Here, $\eta^{x,y}$ is the configuration obtained from $\eta$ after a particle jumps from $x$ to $y$,
\[\eta^{x,y} (z) = \begin{cases}
	\eta(x) - 1, \quad &z=x,\\
	\eta(y) + 1, \quad &z=y,\\
	\eta(z), \quad &z\neq x,y.\\
\end{cases}\]

Throughout the paper, we need the following assumptions on the rate function $g(\cdot)$.

\begin{assumption}\label{assump:g}
(i) There exists a constant $a_0$ such that for any $k \geq 1$, $|g(k+1) - g(k)| \leq a_0$;\\
(ii) there exists $k_0 > 0$ and $a_1 > 0$ such that $g(k) - g(j) > a_1$  for any $k \geq j+k_0$.
\end{assumption}

\begin{remark}
	Condition $(i)$ is needed in order the process to be well defined in the infinite volume (\emph{cf.}\;\cite{Andjel82}).  Condition $(ii)$ ensures spectral gap estimates and logarithmic Sobolev inequality for the zero range process (\emph{cf.}\;\cite{landim1996spectral,pra2005logarithmic}), which are main tools of this note.
\end{remark}

\begin{remark}
We assume condition $(ii)$ for simplicity. The results in this note should also hold as long as the spectral gap of the zero range process shrinks at rate at least $\ell^{-2}$ (not necessarily uniformly in the particle density), where $\ell$ is the size of the underlying lattice.  For example, the  spectral gap estimates have also been proven in the following cases: (a) $g(k) = k^{\alpha}$, $\alpha \in (0,1)$, see \cite{nagahata2010spectral}; (b) $g(k) = \mathbf{1}_{\{k \geq 1\}}$, see \cite{morris2006spectral}.
\end{remark}

It is well known that the zero range process has a family of product invariant measures indexed by the particle density. Precisely speaking, for each $\varphi \geq 0$, let $\bar{\nu}^N_\varphi$ be the product measure on $\Omega_N^d$ with marginals given by
\[\bar{\nu}^N_\varphi (\eta(x) = k) = \frac{1}{Z(\varphi) } \frac{\varphi^k}{g(k)!}, \quad k \geq 0, \;x \in \T_N^d.\]
Here, $Z (\varphi)$ is the normalizing constant and $g(k)! = \prod_{j=1}^k g(j)$ with the convention that $g(0)! = 1$.  Under Assumption \ref{assump:g}, $Z(\varphi) < \infty$ for any $\varphi \geq 0$. In particular,
\[E_{\bar{\nu}^N_\varphi} [e^{\lambda \eta(0)}] < \infty, \quad \forall \lambda \geq 0.\]
For $\varphi \geq 0$, the particle density under $\bar{\nu}^N_\varphi$ is 
\[R(\varphi) = E_{\bar{\nu}^N_\varphi} [\eta(x)]. \]
It is easy to see that $R(\varphi)$ is strictly increasing in $\varphi$, hence has an inverse denoted by $\Phi := R^{-1}$. To index the invariant measures by particle density $\rho \geq 0$, denote $\nu^N_\rho := \bar{\nu}^N_{\Phi (\rho)}$.

\medspace

Fix $\rho_* > 0$. Let $\rho_0 : \T^d \rightarrow \R_+$ be the initial density profile of the perturbed  quantity. We assume $\rho_0$ is continuously differentiable  and is constant along the drift direction
\begin{equation}\label{assump:initial}
m \cdot \nabla \rho_0 (u) = 0, \quad \forall u \in \T^d.
\end{equation}
Let $\alpha > 0$ denote the strength of the perturbed quantity. The initial distribution of the process is
\begin{equation}\label{initialdistri}
\mu_{N,0} (d \eta)=  \bigotimes_{x \in \T_N^d} \nu^1_{\rho_* + N^{-\alpha} \rho_0 (x/N)} (d \eta(x)).
\end{equation}
Denote by $\mu_{N,t}$ the distribution of the process with generator $N^2 \gen$ at time $t$ starting from $\mu_{N,0}$. The corresponding accelerated process is denoted by $(\eta_t,t \geq 0)$.

Let $\rho(t,u)$ be the solution to the following heat equation
\begin{equation}\label{heatEqn}
	\begin{cases}
	\partial_t \rho (t,u) = \frac{1}{2}\sum_{i,j = 1}^d \Phi^\prime (\rho_*) \sigma_{i,j} \partial^2_{u_i,u_j} \rho (t,u),\\
	\rho(0,u) = \rho_0 (u).
\end{cases}\end{equation}
Above, $\sigma_{i,j} = \sum_{x \in\T_N^d}x_i x_j p(x)$ for $ 1 \leq i,j \leq d$.  Define the reference measure as 
\[\nu_{N,t} (d \eta) = \bigotimes_{x \in \T_N^d} \nu_{\rho_*+N^{-\alpha} \rho (t,x/N)}^1 (d \eta (x)). \]

For two probability measures $\mu,\nu$ on $\Omega_N^d$ such that $\mu$ is absolutely continuous with respect to $\nu$, recall the relative entropy of $\mu$ with respect to $\nu$ is defined as 
\[H(\mu|\nu) = \int \log (d \mu / d \nu) d \mu.\]
To make notation simple, denote
\[h_N(t) = N^{-d} H(\mu_{N,t} | \nu_{N,t}).\]

The following is the main result of this note.

\begin{theorem}\label{thm} Let $d \geq 2$. For any $\alpha \in (0,1)$, we have $h_N (t) = o(N^{-2 \alpha})$.
\end{theorem}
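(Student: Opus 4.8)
\textbf{Plan.}
The proof runs through Yau's relative entropy method, as in \cite{toth2002between}. Write $H_N(t)=H(\mu_{N,t}\mid\nu_{N,t})$, so $h_N(t)=N^{-d}H_N(t)$, and note that $\mu_{N,0}=\nu_{N,0}$ since $\rho(0,\cdot)=\rho_0$, hence $H_N(0)=0$. The starting point is the entropy production inequality
\[
\partial_t H_N(t)\le \int_{\Omega_N^d}\Big( N^2\,\tfrac{\gen^{*}\nu_{N,t}}{\nu_{N,t}}(\eta)-\partial_t\log\nu_{N,t}(\eta)\Big)\,\mu_{N,t}(d\eta),
\]
where $\gen^{*}$ is the adjoint with respect to the counting measure. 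Because $\nu_{N,t}$ is a product measure with marginals $\nu^1_{\rho_*+N^{-\alpha}\rho(t,x/N)}$, both terms in the integrand can be written down explicitly: with $\varphi(t,u):=\Phi(\rho_*+N^{-\alpha}\rho(t,u))$, $\lambda(t,u):=\log\varphi(t,u)$ (the chemical potential) and $\gamma(t,u):=\log Z(\varphi(t,u))$, one gets $\partial_t\log\nu_{N,t}(\eta)=\sum_{x}\big(\eta(x)\,\partial_t\lambda(t,x/N)-\partial_t\gamma(t,x/N)\big)$ and, using that under a jump the log-density changes by $\lambda(t,(x+y)/N)-\lambda(t,x/N)$,
\[
N^2\,\tfrac{\gen^{*}\nu_{N,t}}{\nu_{N,t}}(\eta)=\sum_{x\in\T_N^d}g(\eta(x))\sum_{y}p(y)\,N^2\Big(e^{\lambda(t,(x-y)/N)-\lambda(t,x/N)}-1\Big).
\]

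The core of the argument is a multi-scale Taylor expansion of this integrand in $N^{-1}$ and in $N^{-\alpha}$, organised so that all contributions larger than $o(N^{d-2\alpha})$ cancel. First, the relations $R(\varphi)=\varphi\,\partial_\varphi\log Z(\varphi)$ and $R=\Phi^{-1}$ give $\partial_t\gamma=(\rho_*+N^{-\alpha}\rho)\,\partial_t\lambda$, so $\partial_t\log\nu_{N,t}(\eta)=\sum_x(\eta(x)-\rho_*-N^{-\alpha}\rho(t,x/N))\,\partial_t\lambda(t,x/N)$ is automatically centered under $\nu_{N,t}$. The $O(N^{d+1-\alpha})$ term in the generator piece is $N\sum_x g(\eta(x))\,m\cdot\nabla\lambda(t,x/N)$, and it vanishes identically: $\nabla\lambda$ is proportional to $\nabla\rho$, and $m\cdot\nabla\rho(t,\cdot)\equiv0$ because $m\cdot\nabla\rho_0\equiv0$ by \eqref{assump:initial} and $w:=m\cdot\nabla\rho$ solves the same heat equation \eqref{heatEqn} with zero initial data. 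The remaining $O(N^{d-\alpha})$ contributions split into a deterministic part, controlled by $\sum_x \sum_{i,j}\sigma_{i,j}\partial^2_{u_i,u_j}\rho(t,x/N)\approx N^d\int_{\T^d}\sum_{i,j}\sigma_{i,j}\partial^2_{u_i,u_j}\rho\,du=0$, and a fluctuating part which, after using \eqref{heatEqn} to substitute $\partial_t\rho$, collapses to $\Phi^\prime(\rho_*)\Phi(\rho_*)^{-1}N^{-\alpha}\sum_x \Omega_x(\eta)\,G(t,x/N)$, where $G:=\tfrac12\sum_{i,j}\sigma_{i,j}\partial^2_{u_i,u_j}\rho$ and $\Omega_x(\eta):=g(\eta(x))-\Phi(\rho_*+N^{-\alpha}\rho(t,x/N))-\Phi^\prime(\rho_*)\big(\eta(x)-\rho_*-N^{-\alpha}\rho(t,x/N)\big)$ is a $\nu_{N,t}$-centered local function. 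The surviving $O(N^{d-2\alpha})$ deterministic terms, collected from the quadratic part of the exponential and from the second-order Taylor coefficients, cancel thanks to the identity $(\log\Phi)^{\prime\prime}+((\log\Phi)^{\prime})^2=\Phi^{\prime\prime}/\Phi$ together with $\int_{\T^d}\rho\sum_{i,j}\sigma_{i,j}\partial^2_{u_i,u_j}\rho\,du=-\int_{\T^d}\sum_{i,j}\sigma_{i,j}\partial_{u_i}\rho\,\partial_{u_j}\rho\,du$ and, once more, the heat equation; the remaining $O(N^{d-2\alpha})$ fluctuating terms, being linear in the centered variables $\eta(x)-\rho_*-N^{-\alpha}\rho(t,x/N)$, are handled by the entropy inequality $\int F\,d\mu\le \gamma^{-1}H(\mu\mid\nu)+\gamma^{-1}\log\int e^{\gamma F}d\nu$ with $\gamma=O(1)$ and the product, exponentially-integrable structure of $\nu_{N,t}$, which produces $o(N^{d-2\alpha})+C\,H_N(t)$.

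What is left is exactly the one block estimate: the term $N^{-\alpha}\int\sum_x\Omega_x(\eta)\,G(t,x/N)\,d\mu_{N,t}$. Here one replaces $\Omega_x$ by its average over a box $\Lambda_\ell(x)$ of side $\ell=\ell(N)$ (a summation by parts costs $O((\ell/N)^2)$ per site by smoothness of $G$), then replaces $|\Lambda_\ell|^{-1}\sum_{z\in\Lambda_\ell(x)}g(\eta(z))$ by $\Phi$ evaluated at the empirical density $\bar\eta_\ell(x)$ — this is the quantitative step that uses Assumption \ref{assump:g}(ii), via comparison of $\mu_{N,t}$ with the microcanonical measure in the box (controlled by the conditional Dirichlet form through the spectral gap, or the conditional relative entropy through the logarithmic Sobolev inequality) and the equivalence of ensembles — and finally Taylor-expands $\Phi(\bar\eta_\ell(x))$ around $\rho_*+N^{-\alpha}\rho(t,x/N)$: the outcome is quadratic in $\bar\eta_\ell(x)-\rho_*-N^{-\alpha}\rho(t,x/N)$ up to an $O(N^{-\alpha})$-smaller linear term, and both are estimated by the entropy inequality plus the concentration of $\bar\eta_\ell$ under $\nu_{N,t}$. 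Balancing the one-block rate, the discretization error and the concentration error requires $N^{\alpha/d}\ll\ell\ll N^{1-\alpha/2}$, which is possible precisely because $\alpha\in(0,1)$; this makes all of these contributions $o(N^{d-2\alpha})$. The precise statement and proof of this estimate are deferred to Section \ref{sec:oneblock}.

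Putting the pieces together yields $\partial_t H_N(t)\le C\,H_N(t)+o(N^{d-2\alpha})$ uniformly on $[0,T]$, with $C$ independent of $N$; since $H_N(0)=0$, Gr\"onwall's inequality gives $H_N(t)\le e^{CT}o(N^{d-2\alpha})=o(N^{d-2\alpha})$, i.e.\ $h_N(t)=o(N^{-2\alpha})$. I expect the main obstacle to be the quantitative one block estimate, i.e.\ producing an effective error rate for $|\Lambda_\ell|^{-1}\sum_{z}g(\eta(z))\approx\Phi(\bar\eta_\ell)$ under the non-equilibrium law $\mu_{N,t}$, uniformly over the (possibly large) block density, and then tuning $\ell$ so that this rate together with the smoothness and concentration errors beats $N^{-2\alpha}$ per unit volume. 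A minor technical point is that, as $\rho_0$ is only $C^1$, the higher space–time derivatives of $\rho(t,\cdot)$ blow up as $t\downarrow0$, so the Taylor remainders must be kept integrable near $t=0$ (equivalently one runs the Gr\"onwall argument on $[\delta,T]$ and lets $\delta\downarrow0$, using $H_N(0)=0$ and continuity of $t\mapsto H_N(t)$).
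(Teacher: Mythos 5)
Your proposal is correct and follows essentially the same route as the paper: Yau's relative entropy inequality with reference measure $\nu_{N,t}$, cancellation of the $O(N^{d+1-\alpha})$ drift term via $m\cdot\nabla\rho(t,\cdot)\equiv 0$, a quantitative one-block estimate proved through spectral gap or logarithmic Sobolev inequality plus equivalence of ensembles, an exponential concentration bound for the quadratic remainder $\Phi(\bar\eta^\ell)-\Phi(\bar\rho^\ell_N)-\Phi'(\bar\rho^\ell_N)(\bar\eta^\ell-\bar\rho^\ell_N)$, and Gr\"onwall. The only presentational difference is that the paper measures several error terms against the flat equilibrium $\nu_{\rho_*}^N$ (using $H(\mu_{N,t}\mid\nu_{\rho_*}^N)\le CN^{d-2\alpha}$ from the entropy--Dirichlet form bound) rather than against $\nu_{N,t}$, and your window $N^{\alpha/d}\ll\ell\ll N^{1-\alpha/2}$ is consistent with the paper's explicit choices of $\ell$.
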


\begin{remark}
	We do not actually need the initial measure $\mu_{N,0}$ to be product as given in \eqref{initialdistri}. But we need it to satisfy: $(i)\;  h_N(0) = o(N^{-2\alpha})$, $(ii)\; N^{-d} H(\mu_{N,0} | \nu^N_{\rho_*}) \leq C N^{-2\alpha}$ for some constant $C$ independent of $N$. 
\end{remark}

For any probability measure $\mu$ on $\Omega_N^d$, denote by $\P_\mu$ the distribution of the process $(\eta_t,t\geq 0)$ with initial distribution $\mu$, and by $\E_\mu$ the corresponding expectation. As a direct consequence of the above theorem, we have the following law of large numbers, whose proof uses entropy inequality and is very standard (\emph{cf.}\;\cite[Corollary 6.1.3]{klscaling} for example). For this reason, we omit the proof.

\begin{corollary}
Under the assumptions of Theorem \ref{thm}, for any continuous function $F: \T^d \rightarrow \R$, for any $t>0$ and for any $\varepsilon > 0$,
\[\lim_{N \rightarrow \infty} \P_{\mu_{N,0}} \Big( \Big| \frac{1}{N^{d-\alpha}} \sum_{x\in \T_N^d} \big( \eta_t (x) - \rho_*\big) F \big(\tfrac{x}{N}\big) - \int_{\T^d} \rho(t,u) F(u) du\Big| > \varepsilon \Big) = 0.\]
Above, $\rho(t,u)$ is the solution to the heat equation \eqref{heatEqn}.
\end{corollary}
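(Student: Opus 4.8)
The plan is to deduce the law of large numbers from the entropy estimate $h_N(t)=o(N^{-2\alpha})$ of Theorem~\ref{thm}, following the standard scheme (\emph{cf.}\;\cite[Corollary 6.1.3]{klscaling}): first establish the statement under the product reference measure $\nu_{N,t}$ together with a large deviation bound, and then transfer it to $\mu_{N,t}$ through the entropy inequality. \emph{Step 1 (the reference measure).} Since $\nu_{N,t}$ is a product measure with $E_{\nu_{N,t}}[\eta_t(x)]=R(\Phi(\rho_*+N^{-\alpha}\rho(t,x/N)))=\rho_*+N^{-\alpha}\rho(t,x/N)$, one has the deterministic identity
\[
\frac{1}{N^{d-\alpha}}\sum_{x\in\T_N^d}\big(E_{\nu_{N,t}}[\eta_t(x)]-\rho_*\big)\,F\big(\tfrac xN\big)
=\frac{1}{N^{d}}\sum_{x\in\T_N^d}\rho\big(t,\tfrac xN\big)F\big(\tfrac xN\big)\xrightarrow[N\to\infty]{}\int_{\T^d}\rho(t,u)F(u)\,du,
\]
the limit being a Riemann sum, legitimate because $F$ and, by parabolic regularity for \eqref{heatEqn}, $\rho(t,\cdot)$ are continuous. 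Hence it suffices to show that for every $\varepsilon>0$,
\[
\lim_{N\to\infty}\P_{\mu_{N,0}}\big(|X_N|>\varepsilon\big)=0,\qquad
X_N:=\frac{1}{N^{d-\alpha}}\sum_{x\in\T_N^d}\Big(\eta_t(x)-\rho_*-N^{-\alpha}\rho\big(t,\tfrac xN\big)\Big)F\big(\tfrac xN\big).
\]

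\emph{Step 2 (large deviations under $\nu_{N,t}$).} Under $\nu_{N,t}$ the summands in $X_N$ are independent and centered; since $\rho$ is bounded, for $N$ large all local densities $\rho_*+N^{-\alpha}\rho(t,x/N)$ stay in a fixed compact subinterval of $(0,\infty)$, on which $\sup_\rho E_{\nu^1_\rho}[e^{\lambda\eta(0)}]<\infty$ for some fixed $\lambda>0$. A Chernoff bound — using the uniform exponential moment, the Laplace transform factorizing over sites — then yields a constant $c=c(\varepsilon,F)>0$ such that, for all large $N$,
\[
\nu_{N,t}\big(|X_N|>\varepsilon\big)\le 2\exp\big(-c\,N^{d-2\alpha}\big).
\]
The rate $N^{d-2\alpha}$ is the correct one: under $\nu_{N,t}$ the sum $\sum_x(\eta_t(x)-E_{\nu_{N,t}}\eta_t(x))F(x/N)$ has standard deviation of order $N^{d/2}$, which is much smaller than the threshold $\varepsilon N^{d-\alpha}$ since $\alpha<1\le d/2$, so $\{|X_N|>\varepsilon\}$ is a large deviation event with square-of-the-gap rate $(N^{d-\alpha}/N^{d/2})^2=N^{d-2\alpha}$.

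\emph{Step 3 (transfer to $\mu_{N,t}$).} Write $A_N=\{|X_N|>\varepsilon\}$. By the entropy inequality,
\[
\P_{\mu_{N,0}}\big(|X_N|>\varepsilon\big)=\mu_{N,t}(A_N)\le\frac{\log 2+H(\mu_{N,t}|\nu_{N,t})}{\log\big(1+1/\nu_{N,t}(A_N)\big)}=\frac{\log 2+N^{d}h_N(t)}{\log\big(1+1/\nu_{N,t}(A_N)\big)}.
\]
By Theorem~\ref{thm} the numerator equals $\log 2+o(N^{d-2\alpha})$, while by Step 2 the denominator is at least $\tfrac c2\,N^{d-2\alpha}$ for $N$ large; since $d-2\alpha>0$ (because $d\ge2$, $\alpha<1$), the right-hand side tends to $0$. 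Combined with Step 1 this proves the corollary.

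The only delicate point is the matching of scales in Step 3. Chebyshev's inequality alone would give only $\nu_{N,t}(A_N)=O(N^{2\alpha-d})$, hence a denominator of order $\log N$, far too small to absorb a numerator that may grow like $o(N^{d-2\alpha})$; it is essential to use the genuine exponential bound of Step 2, whose rate $N^{d-2\alpha}$ is exactly the order of entropy growth permitted by Theorem~\ref{thm}. Everything else is routine.
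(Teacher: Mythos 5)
Your proof is correct and is precisely the standard argument the paper invokes by reference to \cite[Corollary 6.1.3]{klscaling} (the paper omits the proof entirely): a Riemann-sum identification of the limit under the product reference measure, a Chernoff bound giving $\nu_{N,t}(A_N)\le 2e^{-cN^{d-2\alpha}}$, and the entropy inequality for events combined with $H(\mu_{N,t}|\nu_{N,t})=o(N^{d-2\alpha})$ from Theorem \ref{thm}. Your closing remark correctly identifies why the exponential (rather than Chebyshev) bound is needed to match the entropy scale $N^{d-2\alpha}$.
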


\section{Relative Entropy}\label{sec:relativeentropy}

In this section, we first bound the entropy production of the process in Subsection \ref{subsec:entropy}, then calculate the relative entropy in Subsection \ref{subsec:calculations}, and finally prove Theorem \ref{thm} in the last subsection.  To make notations short, we denote $\rho_N (t,u) := \rho_* + N^{-\alpha} \rho (t,u)$. Recall $\rho(t,u)$ is the solution to the heat equation \eqref{heatEqn}.  We also underline that in the following proof the constant $C$ may be different from line to line, but does not depend on the scaling parameter $N$.

\subsection{Entropy production.}\label{subsec:entropy}  For $s \geq 0$, denote
\[f_s = f_{N,s} = \frac{d \mu_{N,s}}{d \nu_{\rho_*}^N}.\]
For any $\nu_{\rho_*}^N$--density $f$, the Dirichlet form of $f$ with respect to $\nu_{\rho_*}^N$ is defined as
\[D_N (f;\nu_{\rho_*}^N) := \<\sqrt{f} (-\gen)\sqrt{f}\>_{\nu_{\rho_*}^N}. \]
Here, for any function $f$ and any distribution $\mu$ on $\Omega_N^d$, $\langle f \rangle_\mu := \int f du$. Since $\nu_{\rho_*}^N$ is invariant for the generator $\gen$, direct calculations show that
\[D_N (f;\nu_{\rho_*}^N) = \sum_{x,y \in \T_N^d} D_{x,y} (f;\nu_{\rho_*}^N),\]
where $D_{x,y}$ is the piece of Dirichlet form associated to the bond $(x,y)$,
\[D_{x,y} (f;\nu_{\rho_*}^N) = \frac{1}{2} \int g(\eta(x)) s(y-x) \big[ \sqrt{f(\eta^{x,y})} - \sqrt{f(\eta)} \big]^2 d \nu_{\rho_*}^N.\]
Above, $s(x) = [p(x)+p(-x)]/2$.

\begin{lemma}\label{lem:entropy}
There exists a constant $C > 0$ independent of $N$ such that for any $t > 0$,
\[H(\mu_{N,t}| \nu_{\rho_*}^N)  + N^2 \int_0^t D_N (f_s;\nu_{\rho_*}^N) ds \leq C N^{d-2\alpha}.  \]
\end{lemma}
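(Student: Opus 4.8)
The plan is to combine two standard facts: the entropy inequality of relative entropy along the dynamics, and the large-deviation type bound on the initial entropy $H(\mu_{N,0}|\nu_{\rho_*}^N)$. First I would recall the general entropy/Dirichlet-form bound for Markov processes: if $\mu_{N,t}$ is the law at time $t$ of the process with generator $N^2\gen$ started from $\mu_{N,0}$, and $\nu_{\rho_*}^N$ is invariant, then
\[
H(\mu_{N,t}|\nu_{\rho_*}^N) + N^2 \int_0^t D_N(f_s;\nu_{\rho_*}^N)\,ds \;\leq\; H(\mu_{N,0}|\nu_{\rho_*}^N).
\]
This is the classical inequality (see e.g. Lemma A1.9.2 in Kipnis–Landim, or Yau's relative entropy paper): one differentiates $t\mapsto H(\mu_{N,t}|\nu_{\rho_*}^N)=\langle f_t\log f_t\rangle_{\nu_{\rho_*}^N}$, uses $\partial_t f_t = N^2\gen^* f_t = N^2 \gen f_t$ (self-adjointness is not needed; one only needs $\nu_{\rho_*}^N$ stationary and the elementary inequality $(\log a - \log b)(a-b)\ge (\sqrt a-\sqrt b)^2 \cdot \text{const}$, actually one uses $\gen$ acting and the convexity bound $\langle \gen f, \log f\rangle_{\nu_{\rho_*}^N} \le -c\,D_N(\sqrt f\,;\nu_{\rho_*}^N)$), to get $\frac{d}{dt}H(\mu_{N,t}|\nu_{\rho_*}^N) \le -N^2 D_N(f_t;\nu_{\rho_*}^N)$, and integrates in time.

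It then remains to show $H(\mu_{N,0}|\nu_{\rho_*}^N) \leq C N^{d-2\alpha}$. Since both $\mu_{N,0}$ and $\nu_{\rho_*}^N$ are product measures over $x\in\T_N^d$, the relative entropy factorizes:
\[
H(\mu_{N,0}|\nu_{\rho_*}^N) = \sum_{x\in\T_N^d} H\big(\nu^1_{\rho_*+N^{-\alpha}\rho_0(x/N)}\,\big|\,\nu^1_{\rho_*}\big).
\]
For each $x$ I would Taylor-expand the single-site relative entropy in the perturbation parameter. Writing $\lambda(x)=N^{-\alpha}\rho_0(x/N)$, the function $\lambda \mapsto H(\nu^1_{\rho_*+\lambda}|\nu^1_{\rho_*})$ vanishes at $\lambda=0$ together with its first derivative (the first derivative of a relative entropy at the point where the two measures coincide is zero), so by a second-order Taylor expansion it is $O(\lambda^2)$ uniformly, using that $\rho_0$ is bounded (it is continuous on the compact torus) and that the single-site measures have exponential moments (so the relevant second derivative, essentially the static compressibility $\chi(\rho_*)=\mathrm{Var}_{\nu^1_{\rho_*}}(\eta(0))$, is finite and locally bounded). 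Summing over the $N^d$ sites gives $H(\mu_{N,0}|\nu_{\rho_*}^N) \le C N^d \cdot N^{-2\alpha} = C N^{d-2\alpha}$, and discarding the nonnegative entropy term on the left of the entropy inequality yields the claimed bound (note the final bound is stated with $H(\mu_{N,t}|\nu_{\rho_*}^N)$ nonnegative, so both terms on the left are kept).

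The only mild subtlety — and the step I expect to need the most care — is making the Taylor estimate for the single-site relative entropy uniform in $x$ and in $N$: one must check that the second derivative of $\lambda\mapsto H(\nu^1_{\rho_*+\lambda}|\nu^1_{\rho_*})$ is bounded on a neighborhood of $\lambda=0$ containing all the values $N^{-\alpha}\rho_0(x/N)$, which follows from Assumption \ref{assump:g} guaranteeing finite exponential moments (hence smooth dependence of $Z(\varphi)$, $R(\varphi)$ and $\Phi$ on the parameter) together with the compactness of $\T^d$ and continuity of $\rho_0$. Everything else is a direct application of the entropy inequality and the product structure; no one-block estimate or spectral-gap input is needed at this stage (those enter only later, in Section \ref{sec:oneblock}).
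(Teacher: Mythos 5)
Your proposal is correct and follows essentially the same route as the paper: the standard entropy--Dirichlet form inequality for a process with invariant measure $\nu_{\rho_*}^N$, followed by the bound $H(\mu_{N,0}|\nu_{\rho_*}^N)\leq CN^{d-2\alpha}$ obtained from the product structure and a second-order Taylor expansion in the perturbation (the paper carries this out explicitly via the identity $Z'(\Phi(\rho))/Z(\Phi(\rho))=\rho/\Phi(\rho)$, which is exactly the mechanism behind your observation that the first derivative of the single-site relative entropy vanishes at coincidence). The only nitpick is that the second derivative in the mean parameterization is $\Phi'(\rho_*)/\Phi(\rho_*)=1/\chi(\rho_*)$, the \emph{inverse} compressibility, not $\chi(\rho_*)$ itself; this does not affect the argument since $\chi(\rho_*)>0$.
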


\begin{proof}
Since $\nu_{\rho_*}^N$ is invariant for the zero range process, following the proof in \cite[Subsection 5.2]{klscaling} line by line, we have 
\[H(\mu_{N,t}| \nu_{\rho_*}^N)  + N^2 \int_0^t D_N (f_s;\nu_{\rho_*}^N) ds \leq  H(\mu_{N,0}| \nu_{\rho_*}^N). \]
Therefore, to conclude the proof, we only need to show
\[H(\mu_{N,0}| \nu_{\rho_*}^N) \leq C N^{d-2\alpha}.\]
By direct calculations,
\begin{multline*}
H(\mu_{N,0}| \nu_{\rho_*}^N)  = \int \log \frac{\mu_{N,0} (\eta)}{\nu_{\rho_*}^N (\eta)} \mu_{N,0} (d\eta) \\
= \sum_{x \in \T_N^d} \Big\{ \log \frac{Z(\Phi (\rho_*))}{Z(\Phi ( \rho_N (0,\tfrac{x}{N}))} + \rho_N (0,\tfrac{x}{N}) \log \frac{\Phi (\rho_N (0,\tfrac{x}{N}))}{\Phi(\rho_*)} \Big\}.
\end{multline*}
Using the basic inequality $\log (1+x) \leq x$, we bound the last line by
\begin{equation}\label{d1}
	 \sum_{x \in \T_N^d} \Big\{  \frac{Z(\Phi (\rho_*) )- Z(\Phi (\rho_N (0,\tfrac{x}{N}))}{Z(\Phi (\rho_N (0,\tfrac{x}{N}))} 
	 + \rho_N (0,\tfrac{x}{N}) \frac{\Phi (\rho_N (0,\tfrac{x}{N})) - \Phi (\rho_*)}{\Phi(\rho_*)} \Big\}.
\end{equation}
Note that  for any $\rho > 0$, 
\[\frac{Z^\prime (\Phi (\rho))}{Z(\Phi (\rho))} = \frac{\rho}{\Phi (\rho)}.\]
Then, by Taylor's expansion, the first term inside the brace in \eqref{d1} equals
\[\frac{\rho_N (0,\tfrac{x}{N})}{\Phi (\rho_N (0,\tfrac{x}{N}))} \big[ \Phi (\rho_* ) - \Phi (\rho_N (0,\tfrac{x}{N})) \big]+ \Ocal (N^{-2\alpha}).\]
Therefore, we may bound the term \eqref{d1} by
\[
	\sum_{x \in \T^d_N} \rho_N (0,\tfrac{x}{N})  \big(\Phi (\rho_N (0,\tfrac{x}{N})) - \Phi (\rho_*) \big) \Big[\frac{1}{\Phi (\rho_*)} - \frac{1}{\Phi (\rho_N (0,\tfrac{x}{N}))}\Big] + \Ocal (N^{d-2\alpha}).
\]
We conclude the proof by noting that $|\Phi (\rho_N (0,\tfrac{x}{N})) - \Phi (\rho_*)| \leq C N^{-\alpha}$.
\end{proof}

\subsection{Calculations.}\label{subsec:calculations}  Let
\[\psi_{N,t} := \frac{d \nu_{N,t}}{d \nu_{\rho_*}^N}.\]
Since $\nu_{N,t}$ and $\nu_{\rho_*}^N$ are both product measures, $\psi_{N,t}$ is explicitly given by
\[\psi_{N,t} (\eta)= \prod_{y \in \T_N^d} \frac{Z(\Phi(\rho_*))}{Z(\Phi (\rho_N(t,\tfrac{y}{N})))}  \Big[\frac{\Phi (\rho_N(t,\tfrac{y}{N}))}{\Phi (\rho_*)}\Big]^{\eta(y)}.\]
Using Yau's relative entropy inequality (\emph{cf.}\;\cite[Lemma 6.1.4]{klscaling} for example),
\begin{equation}\label{entropy0}
\frac{dh_N(t)}{dt} \leq N^{-d} \int \big\{ \psi_{N,t}^{-1} N^{2} \gen^* \psi_{N,t} - \partial_t \log \psi_{N,t} \big\} d \mu_{N,t}.
\end{equation}
Above, $\gen^*$ is the adjoint generator of $\gen$ in $L^2 (\nu_{\rho_*}^N)$ and corresponds to the zero range process with jump rate $p(-\cdot)$.  Precisely speaking, for any $f: \Omega_N^d \rightarrow \R$,
\[\gen^* f (\eta) = \sum_{x,y\in \T_N^d} g(\eta(x)) p(-y) \big[ f(\eta^{x,x+y}) - f(\eta) \big].\]

\medspace

Now we calculate the right hand side of \eqref{entropy0}. By direct calculations,
\begin{multline}\label{lPsi}
	\psi_{N,t}^{-1} N^{2-d} \gen^* \psi_{N,t} 
	= N^{2-d} \sum_{x,y\in \T_N^d} g(\eta(y)) p(y-x)  \bigg[ \frac{\Phi (\rho_N(t,\tfrac{x}{N}))}{\Phi (\rho_N(t,\tfrac{y}{N}))} - 1 \bigg]\\
	= - N^{1-d} \sum_{y\in\T_N^d} \sum_{i=1}^d   \frac{m_i \partial_{u_i} \Phi (\rho_N(t,\tfrac{y}{N}))}{\Phi (\rho_N(t,\tfrac{y}{N}))} \big[g(\eta(y)) - \Phi (\rho_N(t,\tfrac{y}{N}))\big]\\
	+ \frac{1}{2N^d}  \sum_{y\in\T_N^d} \sum_{i,j=1}^d  \frac{\sigma_{i,j} \partial^2_{u_i,u_j} \Phi (\rho_N(t,\tfrac{y}{N}))}{\Phi (\rho_N(t,\tfrac{y}{N}))} \big[g(\eta(y)) - \Phi (\rho_N(t,\tfrac{y}{N}))\big] + \mathcal{E}_{N,1} + \Ocal (N^{-1-\alpha}).
\end{multline}
Above, the term $\Ocal (N^{-1-\alpha})$ comes from the errors by Taylor's expansion since there exists some constant $C$ independent of $N$ such that
\[\sup_{u \in \T^d} \big| \partial^3_{u_i,u_j,u_k} \Phi (\rho_N(t,u)) \big| \leq C N^{-\alpha}, \quad \forall 1 \leq i,j,k \leq d,\]
and the other error term is given by
\begin{equation}\label{en1}
	\mathcal{E}_{N,1} = - N^{1-d} \sum_{y\in\T_N^d} \sum_{i=1}^d m_i \partial_{u_i} \Phi (\rho_N(t,\tfrac{y}{N}))
	+ \frac{1}{2N^d} \sum_{y\in\T_N^d} \sum_{i,j=1}^d \sigma_{i,j} \partial^2_{u_i,u_j} \Phi (\rho_N(t,\tfrac{y}{N})).
\end{equation}

\begin{lemma}\label{lem:error1}
There exists a constant $C$ independent of $N$ such that
	\[|\mathcal{E}_{N,1} | \leq C \big(N^{-1-\alpha} + N^{-3\alpha}\big).\]
In particular, since $\alpha \in (0,1)$, $\mathcal{E}_{N,1} = o(N^{-2\alpha})$.
\end{lemma}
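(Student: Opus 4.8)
The plan is to estimate the two sums in \eqref{en1} by comparing them with their continuum counterparts, i.e.\ by recognizing each sum as a Riemann sum and quantifying the error. Write $\Psi(t,u) := \Phi(\rho_N(t,u)) = \Phi(\rho_* + N^{-\alpha}\rho(t,u))$; since $\rho(t,\cdot) \in C^2$ (it solves the heat equation \eqref{heatEqn} with $C^1$ initial data, hence is smooth for $t>0$, and one has uniform control up to $t=0$ by the assumptions on $\rho_0$), and $\Phi$ is smooth, $\Psi(t,\cdot)$ is smooth with $\partial_{u_i}\Psi = N^{-\alpha}\Phi'(\rho_*)\partial_{u_i}\rho + O(N^{-2\alpha})$ and $\partial^2_{u_i,u_j}\Psi = N^{-\alpha}\Phi'(\rho_*)\partial^2_{u_i,u_j}\rho + O(N^{-2\alpha})$.

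For the first sum, $N^{1-d}\sum_{y\in\T_N^d}\sum_i m_i \partial_{u_i}\Psi(t,\tfrac yN)$, I would first use periodicity: since we are summing a discrete derivative over the full torus, $\frac{1}{N^d}\sum_{y}\partial_{u_i}\Psi(t,\tfrac yN) = \int_{\T^d}\partial_{u_i}\Psi(t,u)\,du + O(N^{-2})$ by the standard Riemann-sum error bound (using $\Psi \in C^2$, whose second derivatives are $O(N^{-\alpha})$, so the Riemann error is actually $O(N^{-2-\alpha})$), and the integral $\int_{\T^d}\partial_{u_i}\Psi\,du$ vanishes because $\T^d$ has no boundary. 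Hence after multiplying by $N$ this term is $O(N^{1-d}\cdot N^d \cdot N^{-2-\alpha}) = O(N^{-1-\alpha})$. Crucially, the drift condition \eqref{assump:initial}, $m\cdot\nabla\rho_0 = 0$, is \emph{not} actually needed for this vanishing on the torus — it is the topology of $\T^d$ that kills the leading order; but the drift condition does enter by ensuring the solution $\rho(t,\cdot)$ also satisfies $m\cdot\nabla\rho(t,u)=0$, which is what makes the microscopic current computation in \eqref{lPsi} legitimate in the first place.

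For the second sum, $\frac{1}{2N^d}\sum_y\sum_{i,j}\sigma_{i,j}\partial^2_{u_i,u_j}\Psi(t,\tfrac yN)$, I would similarly replace it by $\frac12\int_{\T^d}\sum_{i,j}\sigma_{i,j}\partial^2_{u_i,u_j}\Psi(t,u)\,du + O(N^{-2\alpha}\cdot N^{-2})$; the integral of a second derivative over $\T^d$ again vanishes by periodicity. This gives a contribution of size $O(N^{-2\alpha})$ from the Riemann error times the size $N^{-2\alpha}$ of the third derivatives — wait, more carefully: $\partial^3\Psi = O(N^{-\alpha})$, so the Riemann error for the $C^3$ function $\partial^2_{u_i,u_j}\Psi$ is $O(N^{-1}\cdot N^{-\alpha})$... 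I would instead bound more crudely: the whole sum is $\frac{1}{2N^d}\sum_y(\cdots)$, a Riemann sum for an integral that vanishes, with integrand of size $N^{-\alpha}$ and gradient of size $N^{-\alpha}$, giving total size $O(N^{-\alpha}\cdot N^{-1}) = O(N^{-1-\alpha})$. Combining, $|\mathcal E_{N,1}| \leq C(N^{-1-\alpha} + N^{-3\alpha})$, where the $N^{-3\alpha}$ absorbs the contribution of the $O(N^{-2\alpha})$ second-order Taylor remainders in $\partial^2\Psi$ when those are integrated (each such term integrates to something of order $N^{-2\alpha}\cdot N^{-\alpha}$... more simply, the $O(N^{-2\alpha})$ piece is itself a nice periodic function summed over $\T_N^d$, but it need not integrate to zero, so one only gets $\frac{1}{N^d}\cdot N^d \cdot N^{-2\alpha}$? — no: examine which term survives). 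The honest bookkeeping: expanding $\Phi(\rho_*+N^{-\alpha}\rho)$ to third order, the sums over $\T_N^d$ of the pure first-order ($N^{-\alpha}$) and pure second-order ($N^{-2\alpha}$) pieces each have an exact discrete derivative / second-difference structure and telescope to contribute only their Riemann-sum errors $O(N^{-2})$ relative to themselves, while the third-order ($N^{-3\alpha}$) piece is bounded by its size $N^{-3\alpha}$ directly; tracking the powers of $N$ through the $N^{1-d}\sum_y$ and $N^{-d}\sum_y$ prefactors yields exactly $C(N^{-1-\alpha}+N^{-3\alpha})$.

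The main obstacle is the bookkeeping of which Taylor orders telescope (giving small Riemann errors) versus which must be estimated by brute size, and getting the exponents right so that no term of order $N^{-2\alpha}$ or larger survives — in particular making sure the genuinely dangerous $O(N^{-2\alpha})$ terms appearing in \eqref{lPsi} and in $\partial^2\Psi$ do not leak into $\mathcal E_{N,1}$ without a compensating factor. Once the estimate $|\mathcal E_{N,1}|\le C(N^{-1-\alpha}+N^{-3\alpha})$ is in hand, the final claim $\mathcal E_{N,1} = o(N^{-2\alpha})$ is immediate: for $\alpha\in(0,1)$ one has $-1-\alpha < -2\alpha \iff \alpha<1$, which holds, and $-3\alpha<-2\alpha$ always; so both exponents are strictly below $-2\alpha$ and $N^{-1-\alpha}+N^{-3\alpha}=o(N^{-2\alpha})$.
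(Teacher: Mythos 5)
Your argument is correct and reaches the claimed bound, but it departs from the paper's proof in one genuine way. For the first (drift) sum in \eqref{en1} the paper does not use a Riemann-sum argument at all: by \eqref{firstDer} and the fact that \eqref{assump:initial} propagates to $m\cdot\nabla\rho(t,\cdot)=0$ (equation \eqref{nablaZero}), the summand $\sum_i m_i\partial_{u_i}\Phi(\rho_N(t,\tfrac{y}{N}))=N^{-\alpha}\Phi'(\rho_N)\,m\cdot\nabla\rho$ vanishes pointwise, so that term is exactly zero. Your alternative --- the integral of a gradient over $\T^d$ vanishes, and the Riemann error for a periodic summand is $O(N^{-2}\|\partial^3\Phi(\rho_N)\|_\infty)=O(N^{-2-\alpha})$ --- is also valid and gives $O(N^{-1-\alpha})$ after the factor $N$; but note that the crude first-order (Lipschitz) Riemann bound would only give $O(N^{-\alpha})$, which is \emph{not} $o(N^{-2\alpha})$, so here you genuinely need the second-order cancellation coming from periodicity, hence $C^3$ regularity of $u\mapsto\Phi(\rho_N(t,u))$ (regularity the paper also uses implicitly when deriving \eqref{lPsi}). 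Your side remark that the drift condition is dispensable for $\mathcal{E}_{N,1}$ itself, its real job being to kill the order-$N^{1-\alpha}$ fluctuation term in \eqref{lPsi}, is correct. For the second sum your final bookkeeping coincides with the paper's: the order-$N^{-\alpha}$ and order-$N^{-2\alpha}$ pieces of the expansion are exact second derivatives of periodic functions (indeed $\rho\,\partial^2_{u_i,u_j}\rho+\partial_{u_i}\rho\,\partial_{u_j}\rho=\tfrac12\partial^2_{u_i,u_j}(\rho^2)$), so they contribute only Riemann errors $O(N^{-1-\alpha})$ and $O(N^{-1-2\alpha})$, while the Taylor remainder is bounded by its brute size $O(N^{-3\alpha})$; the paper phrases the vanishing of the leading integral via conservation of mass for the heat equation, which is the same fact. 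In short, both routes yield $C(N^{-1-\alpha}+N^{-3\alpha})$; the paper's treatment of the drift term is exact and needs no extra regularity, while yours is more robust in that it does not invoke \eqref{nablaZero} at that step.
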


\begin{proof} By direct calculations, for $1 \leq i,j \leq d$,
	\begin{align}
	\partial_{u_i} \Phi (\rho_N(t,u)) &= N^{-\alpha} \Phi^\prime (\rho_N(t,u)) \partial_{u_i} \rho (t,u),\label{firstDer}\\
	\partial^2_{u_i,u_j} \Phi (\rho_N(t,u)) &= N^{-\alpha} \Phi^\prime (\rho_N(t,u)) \partial^2_{u_i,u_j} \rho (t,u) 
+ N^{-2\alpha} \Phi^{\prime \prime}(\rho_N(t,u)) \partial_{u_i} \rho (t,u)  \partial_{u_j} \rho (t,u).\label{secondDer}
	\end{align}
By assumption \eqref{assump:initial}, 
\begin{equation}\label{nablaZero}
m \cdot \nabla \rho(t,u) = 0
\end{equation}
for all $t \geq 0$ and for all $u \in \T^d$. Therefore,  the first term on the right hand side of \eqref{en1} is equal to zero. Since
\[\Phi^\prime (\rho_N(t,u)) = \Phi^\prime (\rho_*) + N^{-\alpha} \Phi^{\prime \prime} (\rho_*) \rho(t,u) + \Ocal (N^{-2\alpha}), \quad \Phi^{\prime \prime} (\rho_N(t,u)) = \Phi^{\prime \prime}  (\rho_*) + \Ocal (N^{-\alpha}),\]
by \eqref{secondDer},  the second term in \eqref{en1} equals
\begin{multline}\label{en1-1}
	\frac{1}{2N^{d+\alpha}} \sum_{y\in\T_N^d} \sum_{i,j = 1}^d \Phi^\prime (\rho_*) \sigma_{i,j} \partial_{u_i,u_j}^2 \rho(t,\tfrac{y}{N}) \\
	+ \frac{1}{2N^{d+2 \alpha}} \sum_{y\in\T_N^d} \sum_{i,j = 1}^d \Phi^{\prime\prime} (\rho_*)  \sigma_{i,j} \big[ \rho(t,\tfrac{y}{N}) \partial_{u_i,u_j}^2 \rho(t,\tfrac{y}{N}) + \partial_{u_i} \rho(t,\tfrac{y}{N}) \partial_{u_j} \rho(t,\tfrac{y}{N}) \big] \\
	+ \Ocal (N^{-3\alpha}).
\end{multline}
Since 
\[\Big| \frac{1}{N^d} \sum_{y\in\T_N^d} \sum_{i,j = 1}^d \Phi^\prime (\rho_*) \sigma_{i,j} \partial_{u_i,u_j}^2 \rho(t,\tfrac{y}{N}) - \int_{\T^d} \sum_{i,j=1}^d \Phi^\prime (\rho_*) \sigma_{i,j} \partial_{u_i,u_j}^2 \rho(t,u) du \Big| \leq \frac{C}{N},\]
and by the conservation of particle numbers,
\[\int_{\T^d} \frac{1}{2} \sum_{i,j=1}^d \Phi^\prime (\rho_*) \sigma_{i,j} \partial_{u_i,u_j}^2 \rho(t,u) du = \partial_t \int_{\T^d} \rho(t,u) du =0, \]
the first term in \eqref{en1-1} is bounded by $C N^{-1-\alpha}$. Similarly, the second term in \eqref{en1-1} is bounded by $C N^{-1-2\alpha}$ since
\[\int_{\T^d} \sum_{i,j=1}^d \sigma_{i,j} \big[ \rho(t,u)  \partial_{u_i,u_j}^2 \rho(t,u)  + \partial_{u_i} \rho(t,u) \partial_{u_j} \rho (t,u)\big]du = 0.\]
This concludes the proof.
\end{proof}

By \eqref{firstDer} and \eqref{nablaZero}, the first term on the right hand side of \eqref{lPsi} equals zero. By \eqref{secondDer} and Lemma \ref{lem:error1}, we rewrite  $\psi_{N,t}^{-1} N^{2-d} \gen^* \psi_{N,t}$ as
\begin{multline}\label{lPsi1}
	\psi_{N,t}^{-1} N^{2-d} \gen^* \psi_{N,t}  \\=  \frac{1}{2N^{d+\alpha}}  \sum_{y\in\T_N^d} \sum_{i,j=1}^d \frac{\sigma_{i,j} \Phi^\prime (\rho_N(t,\tfrac{y}{N})) \partial^2_{u_i,u_j} \rho (t,\tfrac{y}{N})}{\Phi (\rho_N(t,\tfrac{y}{N}))} 
\big[g(\eta(y)) - \Phi (\rho_N(t,\tfrac{y}{N}))\big] \\
	+ \frac{1}{2N^{d+2\alpha}}  \sum_{y\in\T_N^d} \sum_{i,j=1}^d  \frac{ \sigma_{i,j} \Phi^{\prime \prime} (\rho_N(t,\tfrac{y}{N})) \partial_{u_i} \rho (t,\tfrac{y}{N})  \partial_{u_j} \rho (t,\tfrac{y}{N})}{\Phi (\rho_N(t,\tfrac{y}{N}))} \big[g(\eta(y)) - \Phi (\rho_N(t,\tfrac{y}{N}))\big] + o(N^{-2\alpha}).
\end{multline}

Direct calculations yield
\[N^{-d} \partial_t \log \psi_{N,t} = \frac{1}{N^d} \sum_{y\in\T_N^d} \Big\{  \frac{\partial_t \Phi (\rho_N(t,\tfrac{y}{N}))}{\Phi (\rho_N(t,\tfrac{y}{N}))} \eta(y) - \partial_t \log Z \big(\Phi (\rho_N(t,\tfrac{y}{N})) \big) \Big\}.\]
Since 
$E_{\nu_{N,t}} [\partial_t \log \psi_{N,t}] = E_{\nu_{\rho_*}^N} [\partial_t \psi_{N,t}] =0$ and $\rho_N (t,\frac{y}{N}) = E_{\nu_{N,t}} [\eta(y)] $,
we rewrite the last line as
\[N^{-d} \partial_t \log \psi_{N,t} = \frac{1}{N^d} \sum_{y\in\T_N^d}  \frac{\partial_t \Phi (\rho_N(t,\tfrac{y}{N}))}{\Phi (\rho_N(t,\tfrac{y}{N}))} \big(\eta(y)  - \rho_N(t,\tfrac{y}{N})\big).\]
Note that
\[\partial_t \Phi (\rho_N(t,u)) = N^{-\alpha}\Phi^\prime ( \rho_N (t,u)) \partial_t \rho (t,u) = \frac{\Phi^\prime (\rho_N(t,u))}{2N^\alpha} \sum_{i,j = 1}^d \sigma_{i,j} \Phi^\prime (\rho_*) \partial^2_{u_i,u_j} \rho (t,u).\]
Together with \eqref{lPsi1} and \eqref{entropy0}, we have 
\begin{multline}\label{entropy1}
\frac{dh_N(t)}{dt} \leq \int  \frac{1}{N^{d+\alpha}} \sum_{y\in\T_N^d} a_{N,t} (y) \big[g(\eta(y)) - \Phi (\rho_N(t,\tfrac{y}{N})) - \Phi^\prime (\rho_*) \big(\eta(y)  - \rho_N(t,\tfrac{y}{N})\big) \big] d \mu_{N,t}\\
	+ \int  \frac{1}{N^{d+2\alpha}} \sum_{y\in\T_N^d} b_{N,t} (y)  \big[g(\eta(y)) - \Phi (\rho_N(t,\tfrac{y}{N}))\big] d\mu_{N,t}+ o(N^{-2\alpha}).
\end{multline}
Above,
\begin{align*}
a_{N,t} (y) &= \frac{1}{2} \sum_{i,j=1}^d \frac{\sigma_{i,j}  \Phi^\prime (\rho_N(t,\tfrac{y}{N})) \partial^2_{u_i,u_j} \rho (t,\tfrac{y}{N})}{\Phi (\rho_N(t,\tfrac{y}{N}))} ,\\
b_{N,t} (y) &= \frac{1}{2} \sum_{i,j=1}^d  \frac{ \sigma_{i,j} \Phi^{\prime\prime} (\rho_N(t,\tfrac{y}{N}))\partial_{u_i} \rho (t,\tfrac{y}{N})  \partial_{u_j} \rho (t,\tfrac{y}{N})}{\Phi (\rho_N(t,\tfrac{y}{N}))} .
\end{align*}

\subsection{Proof of Theorem \ref{thm}.}\label{subsec:proof} In this subsection, we deal with the terms on the right hand side of \eqref{entropy1} respectively. We first deal with the second one since  it is simpler.

\begin{lemma}\label{lem:error0}
There exists a constant $C$ independent of $N$ such that
\[\int  \frac{1}{N^{d+2\alpha}} \sum_{y\in\T_N^d} b_{N,t} (y)  \big[g(\eta(y)) - \Phi (\rho_N(t,\tfrac{y}{N}))\big] d\mu_{N,t} \leq C N^{-3\alpha}.\]
\end{lemma}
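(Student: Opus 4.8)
The goal is to control the term
\[
\int \frac{1}{N^{d+2\alpha}} \sum_{y\in\T_N^d} b_{N,t}(y)\,\big[g(\eta(y)) - \Phi(\rho_N(t,\tfrac{y}{N}))\big]\,d\mu_{N,t}.
\]
The key structural fact is that $\Phi(\rho)$ is exactly the $\nu^N_\rho$-expectation of $g(\eta(0))$, so under the product reference measure $\nu_{N,t}$ the bracket $g(\eta(y)) - \Phi(\rho_N(t,\tfrac{y}{N}))$ is centered at each site $y$. Since $b_{N,t}(y)$ is a bounded deterministic coefficient (it is a smooth function of $\rho_N(t,\tfrac{y}{N})$, and $\Phi', \Phi''$ and the derivatives of $\rho$ are all bounded uniformly in $N$ on $\T^d$), the plan is a standard entropy-inequality argument: replace the expectation under $\mu_{N,t}$ by an expectation under $\nu_{N,t}$ at the cost of the relative entropy $H(\mu_{N,t}|\nu_{N,t})$, exploit that the $\nu_{N,t}$-expectation of the sum vanishes (or is negligible), and use the exponential moment bound $E_{\bar\nu^N_\varphi}[e^{\lambda\eta(0)}]<\infty$ together with Assumption \ref{assump:g}(i) (which gives $g(k)\le a_0 k$, hence $g(\eta(0))$ has exponential moments too) to absorb the remaining term.

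Concretely, first I would apply the entropy inequality: for any $\gamma>0$,
\[
\int V_N\, d\mu_{N,t} \le \frac{1}{\gamma N^d} H(\mu_{N,t}|\nu_{N,t}) + \frac{1}{\gamma N^d}\log E_{\nu_{N,t}}\!\big[e^{\gamma N^d V_N}\big],
\]
where $V_N = \frac{1}{N^{d+2\alpha}}\sum_y b_{N,t}(y)[g(\eta(y))-\Phi(\rho_N(t,\tfrac{y}{N}))]$. The first term is $\gamma^{-1} N^{-d} H(\mu_{N,t}|\nu_{N,t})$; I have no a priori bound on this yet inside the proof of Theorem \ref{thm}, so instead I would choose $\gamma$ of order $N^{2\alpha}$ so that this contribution is $C N^{-2\alpha} \cdot N^{-d} H(\mu_{N,t}|\nu_{N,t}) = C h_N(t)$, which is exactly the quantity the Gronwall argument in the proof of Theorem \ref{thm} will absorb. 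For the second (log-moment) term: since the $b_{N,t}(y)$ are uniformly bounded, say $|b_{N,t}(y)|\le C_0$, and $\gamma N^d V_N = C N^{2\alpha}\cdot N^{-2\alpha}\sum_y b_{N,t}(y)[\cdots] = C\sum_y b_{N,t}(y)[g(\eta(y))-\Phi(\cdots)]$ — wait, this is order $N^d$, not small. The correct scaling is to take $\gamma = N^{2\alpha}\delta^{-1}$ only if the per-site contribution to the exponent is small; here it is not.

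Reconsidering: the right approach is simpler. Since $\nu_{N,t}$ is product and centers each bracket, write $g(\eta(y)) - \Phi(\rho_N(t,\tfrac{y}{N})) = [g(\eta(y)) - E_{\nu_{N,t}}g(\eta(y))]$, and I should NOT introduce an exponential tilt at all. Instead I would simply bound the integral crudely: by the entropy inequality with a generic small parameter, or — cleanest — observe that $\mu_{N,t}$ and $\nu_{N,t}$ both have local particle densities bounded by $C$, so $E_{\mu_{N,t}}[g(\eta(y))] \le C$ and $E_{\mu_{N,t}}[|g(\eta(y)) - \Phi(\rho_N(t,\tfrac{y}{N}))|] \le C$ uniformly in $y$ and $N$ (using Assumption \ref{assump:g}(i) to get $g(k)\le a_0 k$ and the uniform $L^1$-control of $\eta(y)$ under $\mu_{N,t}$, which follows from $H(\mu_{N,0}|\nu^N_{\rho_*}) \le C N^{d-2\alpha}$ and the entropy inequality, or more directly from the fact that the total number of particles is conserved in expectation). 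Then
\[
\Big|\int \frac{1}{N^{d+2\alpha}} \sum_{y} b_{N,t}(y)[g(\eta(y))-\Phi(\rho_N(t,\tfrac{y}{N}))]\,d\mu_{N,t}\Big| \le \frac{1}{N^{d+2\alpha}} \sum_{y} C_0 \cdot C = \frac{C N^d}{N^{d+2\alpha}} = C N^{-2\alpha},
\]
which already gives $O(N^{-2\alpha})$. To sharpen this to the claimed $C N^{-3\alpha}$ I would use that $b_{N,t}(y) = N^{-\alpha}\tilde b_{N,t}(y)$: indeed $b_{N,t}(y)$ contains the factor $\Phi''(\rho_N(t,\tfrac{y}{N}))$ times products of derivatives of $\rho$ — but actually the extra smallness comes from the observation that one should instead bound via $E_{\nu_{N,t}}$ and note $\sum_y b_{N,t}(y) E_{\nu_{N,t}}[g(\eta(y))-\Phi(\cdots)] = 0$ exactly, reducing to a variance/fluctuation term; combined with the entropy cost this produces the extra $N^{-\alpha}$. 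I expect the main obstacle to be exactly this sharpening from $N^{-2\alpha}$ to $N^{-3\alpha}$: one must either show that the coefficient $b_{N,t}$ genuinely carries an extra $N^{-\alpha}$ (it does not obviously — it is $O(1)$), or exploit the centering together with a second-order expansion of $g$ around the density. The cleanest route, and the one I would pursue, is: decompose $g(\eta(y)) - \Phi(\rho_N) = [g(\eta(y)) - \Phi'(\rho_*)(\eta(y)-\rho_N) - \Phi(\rho_N)] + \Phi'(\rho_*)(\eta(y)-\rho_N)$; the second piece, after multiplication by $b_{N,t}(y)$ and summation, has $\nu_{N,t}$-mean zero and is handled by entropy inequality giving a $h_N(t) + o(N^{-2\alpha})$ bound exactly as the first term of \eqref{entropy1} will be; the first piece is the genuine "one-block" fluctuation and, being multiplied by the already-small $N^{-2\alpha}$ prefactor (versus $N^{-\alpha}$ for the main term), contributes $o(N^{-2\alpha})$, in fact $O(N^{-3\alpha})$, via the one-block estimate proved in Section \ref{sec:oneblock}. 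I will carry out this decomposition, invoke the bounded-density bound and Assumption \ref{assump:g}(i) for the crude estimates, and cite the one-block estimate of Section \ref{sec:oneblock} for the fluctuation piece.
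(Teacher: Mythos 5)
There is a genuine gap: you never arrive at a proof of the stated bound, and the one route that works is the one you abandoned. The paper's argument is exactly the exponential-tilt/entropy-inequality computation you started with, but relative to the \emph{invariant} measure $\nu_{\rho_*}^N$ rather than $\nu_{N,t}$. First one replaces $\Phi(\rho_N(t,\tfrac{y}{N}))$ by $\Phi(\rho_*)$ at a cost of $N^{-d-2\alpha}\cdot N^d\cdot CN^{-\alpha}=CN^{-3\alpha}$. Then the entropy inequality with $\gamma=N^{d+\alpha}$ gives the two terms $N^{-d-\alpha}H(\mu_{N,t}|\nu_{\rho_*}^N)$ and $N^{-d-\alpha}\log E_{\nu_{\rho_*}^N}[\exp\{N^{-\alpha}\sum_y b_{N,t}(y)(g(\eta(y))-\Phi(\rho_*))\}]$. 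The first is $\le CN^{-3\alpha}$ because Lemma \ref{lem:entropy} (already proved at this point) gives $H(\mu_{N,t}|\nu_{\rho_*}^N)\le CN^{d-2\alpha}$ — you remarked that you ``have no a priori bound'' on the relative entropy, but that is only true for $H(\mu_{N,t}|\nu_{N,t})$; the entropy relative to the invariant measure is controlled a priori, and switching reference measures is the key idea you missed. Your other objection — that the exponent is ``order $N^d$, not small'' — is resolved by the product structure of $\nu_{\rho_*}^N$: the logarithm of the expectation factorizes into a sum of $N^d$ single-site cumulant generating functions, each evaluated at the small parameter $\lambda=N^{-\alpha}b_{N,t}(y)$ and hence $O(\lambda^2)=O(N^{-2\alpha})$ by the elementary bound $\log E[e^{\lambda X}]\le C E[X^4]^{1/2}\lambda^2$ for centered $X$ with exponential moments; summing over sites and dividing by $N^{d+\alpha}$ yields $CN^{-3\alpha}$. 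At the point where you wrote ``Reconsidering,'' you were two lines from the proof.

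The plan you finally settle on does not close. Your crude $L^1$ bound only gives $O(N^{-2\alpha})$, as you note. Your decomposition's second piece, handled via entropy against $\nu_{N,t}$, produces a bound of the form $Ch_N(t)+o(N^{-2\alpha})$, which is not the deterministic $CN^{-3\alpha}$ the lemma asserts (and would be circular if you then tried to use Theorem \ref{thm}). For the first piece, the one-block estimate of Section \ref{sec:oneblock} is not applicable as you invoke it: Lemma \ref{lem:oneblock} concerns the block-averaged quantity $\bar g^{\ell}(\eta(0))-\Phi(\bar\eta^{\ell}(0))$ and is reached only after averaging the coefficients and summation by parts, whereas your first piece $g(\eta(y))-\Phi(\rho_N)-\Phi'(\rho_*)(\eta(y)-\rho_N)$ is a single-site observable compared against a deterministic density; no single-site analogue of the one-block estimate holds. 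So the sharpening from $N^{-2\alpha}$ to $N^{-3\alpha}$ — which you correctly identified as the crux — is left unresolved in your proposal; it comes from the extra factor $N^{-\alpha}$ in the tilt parameter entering \emph{quadratically} in the log-moment generating function, not from any smallness of $b_{N,t}$ or from block estimates.
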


\begin{proof}
Since $|\Phi (\rho_N(t,\tfrac{y}{N}))- \Phi (\rho_*)| \leq C N^{-\alpha}$, we only need to prove 
\begin{equation*}
	\int  \frac{1}{N^{d+2\alpha}} \sum_{y\in\T_N^d} b_{N,t} (y)  \big[g(\eta(y)) - \Phi (\rho_*)\big] d\mu_{N,t} \leq CN^{-3\alpha}.
\end{equation*}
By entropy inequality (\emph{cf.}\;\cite[Appendix 1.8]{klscaling} for example), the left hand side above is bounded by
\[	\frac{H(\mu_{N,t}|\nu_{\rho_*}^N)}{N^{d+\alpha}} + \frac{1}{N^{d+\alpha}} \log E_{\nu_{\rho_*}^N} \Big[ \exp \Big\{ \frac{1}{N^{\alpha}} \sum_{y\in\T_N^d} b_{N,t} (y)  \big[g(\eta(y)) - \Phi (\rho_*)\big] \Big\}\Big].\]
By Lemma \ref{lem:entropy}, $H(\mu_{N,t}|\nu_{\rho_*}^N) \leq CN^{d-2\alpha}$, hence the first term above is bounded by $CN^{-3\alpha}$. Since $\nu_{\rho_*}^N$ is product measure, the second term in the last expression equals
\begin{equation}\label{a1}
\frac{1}{N^{d+\alpha}} \sum_{y\in\T_N^d}  \log E_{\nu_{\rho_*}^N} \Big[ \exp \Big\{ \frac{1}{N^{\alpha}} b_{N,t} (y)  \big[g(\eta(y)) - \Phi (\rho_*)\big] \Big\}\Big].
\end{equation}

For a random variable $X$ such that $E[X] = 0$ and $E[e^{\lambda X}] < \infty$ for all $\lambda  \in \R$, we claim that for any $\lambda_0 > 0$, there exists a constant $C=C(\lambda_0)$ such that for all $0 < \lambda < \lambda_0$,
\begin{equation}\label{f1}
\log E [e^{\lambda X}] \leq C E[X^4]^{1/2} \lambda^2. 
\end{equation}
Indeed, since $e^x \leq 1 + x + (x^2/2) e^{|x|}$ and $\log (1+x) \leq x$, for any $0 < \lambda < \lambda_0$,
\[
\log E [e^{\lambda X}]  \leq \frac{\lambda^2}{2} E \big[ X^2 e^{\lambda |X|}\big] \leq \frac{\lambda^2}{2} E \big[ X^4]^{1/2} E \big[ e^{2 \lambda_0 |X|}\big]^{1/2}  =: C (\lambda_0)E[X^4]^{1/2} \lambda^2.
\]

Using the above claim, we bound \eqref{a1} by $CN^{-3\alpha}$ for large $N$. This completes the proof.
\end{proof}

Using the same argument as in Lemma \ref{lem:error0}, we could replace $\Phi^\prime (\rho_*)$ by $\Phi^\prime (\rho_N (t,\tfrac{y}{N}))$ in the first line in \eqref{entropy1} plus  an error of order $\Ocal (N^{-3\alpha})$. Up to now, we have shown
\begin{multline}\label{entropy2}
		\frac{dh_N(t)}{dt} \leq \int  \frac{1}{N^{d+\alpha}} \sum_{y\in\T_N^d} a_{N,t} (y) \big[g(\eta(y)) -\Phi (\rho_N(t,\tfrac{y}{N})) \\
		- \Phi^\prime (\rho_N(t,\tfrac{y}{N})) \big(\eta(y)  - \rho_N(t,\tfrac{y}{N})\big) \big] d \mu_{N,t}  
		+o(N^{-2\alpha}).
\end{multline}

\medspace

For a positive integer $\ell = \ell(N)$ and for any sequence $\{\phi(x)\}_{x \in \Z^d}$, define
\[\bar{\phi}^\ell (x) = \frac{1}{(2\ell+1)^d} \sum_{|y-x| \leq \ell} \phi (y).\]
In the following, we shall take
\begin{itemize}
	\item either $\ell = c N^{\frac{\alpha+d/2}{d+1}}$ for some sufficiently small  constant $c > 0$ when applying the spectral gap estimate (\emph{cf.} Lemma \ref{lem:spectralgap}),
		\item or $\ell = N^{\frac{\alpha+1}{d+1}}$ when applying the logarithmic Sobolev inequality (\emph{cf.} Lemma \ref{lem:lsi}).
\end{itemize}

\medspace

\begin{lemma}There exists a constant $C$ independent of $N$ such that
\begin{multline}\label{e1}
	\int  \frac{1}{N^{d+\alpha}} \sum_{y\in\T_N^d} \big[ a_{N,t} (y) - \bar{a}_{N,t}^\ell (y) \big]  \big[g(\eta(y)) - \Phi (\rho_N(t,\tfrac{y}{N})) \\- \Phi^\prime (\rho_N(t,\tfrac{y}{N})) \big(\eta(y)  - \rho_N(t,\tfrac{y}{N})\big) \big] d \mu_{N,t} 
	\leq \frac{C \ell^2}{N^{2+2 \alpha}}.
\end{multline}
In particular, with the above choice of $\ell$,  the above term is of order $o(N^{-2\alpha})$.
\end{lemma}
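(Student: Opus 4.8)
The plan is to exploit a summation-by-parts (discrete integration by parts) in the space variable, moving the difference $a_{N,t}(y) - \bar a_{N,t}^\ell(y)$ onto the current-like observable and then estimating the resulting expression via the entropy inequality against $\nu_{\rho_*}^N$, using Lemma~\ref{lem:entropy} to control the entropy and the exponential moment bound \eqref{f1} together with the entropy production term. Write $\Psi(y) := g(\eta(y)) - \Phi(\rho_N(t,\tfrac{y}{N})) - \Phi'(\rho_N(t,\tfrac{y}{N}))(\eta(y) - \rho_N(t,\tfrac{y}{N}))$ for the bracket. The first observation is that replacing $a_{N,t}(y)$ by its average $\bar a_{N,t}^\ell(y)$ is, by definition, a telescoping of nearest-neighbour differences of the smooth profile; since $a_{N,t}$ is a smooth function of $y/N$ with derivatives bounded uniformly in $N$ (here I use that $\rho$ is smooth, so $a_{N,t}(y+e_i) - a_{N,t}(y) = \Ocal(N^{-1})$ and $a_{N,t}(y) - \bar a_{N,t}^\ell(y) = \Ocal(\ell/N)$ once we also use that $\partial^2_{u_i,u_j}\rho$ averaged over a block differs from the pointwise value by $\Ocal(\ell/N)$). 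Actually the cleaner route is to not use the crude bound $|a_{N,t}(y) - \bar a_{N,t}^\ell(y)| \le C\ell/N$ directly — that would only give $C\ell/N^{1+\alpha}\cdot(\text{fluctuation size})$ and lose the extra powers — but rather to perform a genuine resummation.

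So the key step is: rewrite $a_{N,t}(y) - \bar a_{N,t}^\ell(y) = \frac{1}{(2\ell+1)^d}\sum_{|z-y|\le \ell}\big(a_{N,t}(y) - a_{N,t}(y+z)\big)$ and express each difference $a_{N,t}(y) - a_{N,t}(y+z)$ as a telescoping sum of discrete gradients $\nabla_i a_{N,t}$ along a lattice path from $y$ to $y+z$. Then interchange the order of summation in $\frac{1}{N^{d+\alpha}}\sum_y (a_{N,t}(y) - \bar a_{N,t}^\ell(y))\Psi(y)$ to produce, up to boundary corrections of lower order, a bilinear expression $\frac{1}{N^{d+\alpha}}\sum_{\text{bonds } b}(\nabla a_{N,t})(b)\,(\text{block average of }\Psi\text{ near }b)$, where each discrete gradient carries a factor $N^{-1}$ and the block average over $\ell^d$ sites. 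This is the standard device for replacing a function by its spatial average at the cost of a Dirichlet-form/second-moment estimate; concretely one bounds the expectation by the entropy inequality: for any $\gamma>0$,
\[
\int (\cdots)\, d\mu_{N,t} \le \frac{H(\mu_{N,t}|\nu_{\rho_*}^N)}{\gamma N^{d}} + \frac{1}{\gamma N^{d}}\log E_{\nu_{\rho_*}^N}\Big[\exp\Big\{\gamma\,\frac{1}{N^{\alpha}}\sum_y (a_{N,t}(y)-\bar a_{N,t}^\ell(y))\Psi(y)\Big\}\Big].
\]
Since $\nu_{\rho_*}^N$ is a product measure and $a_{N,t} - \bar a_{N,t}^\ell$ is a sum over overlapping blocks, one splits the exponential using Hölder into $\Ocal(\ell^d)$ groups of independent sites; on each group the variable has mean zero under $\nu_{\rho_*}^N$ (because $E_{\nu_{\rho_*}^N}[\Psi(y)]$ is not zero — $\Psi$ is centred only at density $\rho_N$, not $\rho_*$ — so here one first Taylor-expands $E_{\nu_{\rho_*}^N}[g(\eta(y))] - \Phi(\rho_N) - \Phi'(\rho_N)(\rho_* - \rho_N) = \Ocal(N^{-2\alpha})$, which contributes an acceptable $o(N^{-2\alpha})$ error after summing), and then \eqref{f1} gives a bound $\le C(\gamma/N^\alpha)^2 \cdot (\ell/N)^2 \cdot \ell^d \cdot (\text{per-site }L^4)$ for the log-moment-generating function, where $\ell/N$ is the size of the gradient and $\ell^d$ comes from the block. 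Optimizing $\gamma$ and using $H \le C N^{d-2\alpha}$ yields a bound of order $N^{-2\alpha}\cdot \ell^2/N^2 + \ell^2/N^{2+2\alpha}$, i.e. $C\ell^2/N^{2+2\alpha}$ as claimed; with either choice $\ell = cN^{(\alpha+d/2)/(d+1)}$ or $\ell=N^{(\alpha+1)/(d+1)}$ one checks $\ell^2/N^{2+2\alpha}=o(N^{-2\alpha})$, which reduces to $\ell^2 = o(N^2)$ — true in both cases since the exponents $2(\alpha+d/2)/(d+1)$ and $2(\alpha+1)/(d+1)$ are both $<2$ for $\alpha<1$, $d\ge 2$.

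The main obstacle is the bookkeeping in the resummation: one must carefully account for the boundary terms produced when interchanging $\sum_y$ with the telescoping over the lattice path, and verify that these are genuinely of lower order (they involve at most $\ell^{d-1}$ extra sites per slice and an extra $1/N$, hence are negligible), and one must be careful that the block-averaging of $\Psi$ introduces no spurious correlations that spoil the Hölder splitting into independent groups. A secondary technical point is controlling the $L^4(\nu_{\rho_*}^N)$ norm of $g(\eta(y)) - \Phi(\rho_*)$ and of $\eta(y) - \rho_*$ uniformly in $N$ — this follows from the finite exponential moments $E_{\bar\nu^N_\varphi}[e^{\lambda\eta(0)}]<\infty$ guaranteed under Assumption~\ref{assump:g} together with the Lipschitz bound $|g(k+1)-g(k)|\le a_0$ from Assumption~\ref{assump:g}(i), which gives $g(\eta(y)) \le a_0\,\eta(y)$. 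Once these two points are in place the estimate \eqref{e1} follows, and the final sentence about $o(N^{-2\alpha})$ is immediate from the stated choices of $\ell$.
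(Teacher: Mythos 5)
Your proposal goes off track at the very first step: you assert that the best pointwise bound is $|a_{N,t}(y)-\bar a_{N,t}^\ell(y)|=\Ocal(\ell/N)$ and, believing this insufficient to reach $C\ell^2/N^{2+2\alpha}$, you build an elaborate summation-by-parts/resummation scheme. The point you are missing is a cancellation: writing $\bar a_{N,t}^\ell(y)-a_{N,t}(y)=(2\ell+1)^{-d}\sum_{|z|\le\ell}\big[a_{N,t}(y+z)-a_{N,t}(y)\big]$ and Taylor-expanding to second order, the first-order term is proportional to $\sum_{|z|\le\ell}z=0$ by the symmetry of the block, so in fact $|a_{N,t}(y)-\bar a_{N,t}^\ell(y)|\le C\ell^2/N^2$. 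With this sharper deterministic bound, the ``crude'' route you dismissed is exactly the paper's proof: one replaces $\rho_N(t,\tfrac{y}{N})$ by $\rho_*$ in the bracket (costing $\Ocal(\ell^2N^{-2-2\alpha})$, an issue you correctly anticipate), and then applies the entropy inequality site by site against the product measure $\nu_{\rho_*}^N$, using $H(\mu_{N,t}|\nu_{\rho_*}^N)\le CN^{d-2\alpha}$ from Lemma \ref{lem:entropy} together with \eqref{f1}, precisely as in Lemma \ref{lem:error0}. No resummation, no H\"older splitting into blocks, and no control of block averages of $\Psi$ is needed.

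The resummation you propose instead is not carried to a verifiable conclusion. The final power counting (``$C(\gamma/N^\alpha)^2(\ell/N)^2\ell^d$ per group, optimize $\gamma$'') is asserted rather than derived; if one actually balances the entropy term $H/(\gamma N^d)\le CN^{-2\alpha}/\gamma$ against a log-moment-generating-function term built from a per-site coefficient of size $\ell/N$ --- which is what survives your telescoping unless the symmetric cancellation above is exploited somewhere --- the optimal choice of $\gamma$ yields at best $C\ell N^{-1-2\alpha}$, not $C\ell^2N^{-2-2\alpha}$. That weaker bound happens to still be $o(N^{-2\alpha})$ for the admissible choices of $\ell$ (since $\ell=o(N)$), so the ``in particular'' conclusion --- the only part used downstream --- would survive even your crude estimate; but the displayed inequality \eqref{e1} is not established by your argument, and the machinery you invoke (lattice-path telescoping with boundary terms, H\"older over $\Ocal(\ell^d)$ groups, $L^4$ moments of block averages) is all unnecessary once the $\ell^2/N^2$ bound is observed.
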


\begin{remark}
	By Taylor's expansion, the above formula is bounded by $C \ell^2/ N^{2+\alpha}$.  However,  the bound is of order $o(N^{-2\alpha})$ only for $\alpha < (d+2)/(d+3)$ when $\ell = c N^{\frac{\alpha+d/2}{d+1}}$, and only for $\alpha < 2d / (d+3)$ when $\ell = N^{\frac{\alpha+1}{d+1}}$, which is not optimal. 
\end{remark}

\begin{proof}
	Since the proof is exactly the same as  in Lemma \ref{lem:error0}, we only sketch it. Note that  $\big| a_{N,t} (y) - \bar{a}_{N,t}^\ell (y) \big| \leq C \ell^2 / N^2$. Let
	\[A_{N,t}^\ell (\eta) = \frac{N^2}{\ell^2} \big[ a_{N,t} (0) - \bar{a}_{N,t}^\ell (0) \big]  \big[g(\eta(0)) - \Phi (\rho_*) - \Phi^\prime (\rho_*) \big(\eta(0)  - \rho_* \big) \big].\]
	We pay a price of order $\Ocal (\ell^2 N^{-2-2\alpha})$ by replacing $\rho_N (t,\frac{y}{N})$ with $\rho_*$ in \eqref{e1}. By entropy inequality, Lemma \ref{lem:entropy} and inequality \eqref{f1},
	\begin{multline*}
		\int \frac{\ell^2}{N^{d+\alpha+2}} \sum_{y \in \T_N^d} \tau_y A_{N,t}^\ell (\eta) d \mu_{N,t} \leq \frac{H(\mu_{N,t} | \nu_{\rho_*}^N) }{N^{d+2} \ell^{-2}} \\
		+ \frac{1}{N^{d+2} \ell^{-2}} \sum_{y \in \T_N^d} \log \int \exp \Big\{ \frac{1}{N^\alpha} \tau_y A^\ell_{N,t} (\eta)\Big\} d \nu_{\rho_*}^N \leq \frac{C \ell^2}{N^{2+2\alpha}}.
  	\end{multline*}
  This concludes the proof.
\end{proof}

 Using summation by parts formula,
\begin{multline}\label{b1}
\int  \frac{1}{N^{d+\alpha}} \sum_{y\in\T_N^d} \bar{a}_{N,t}^\ell (y)  \big[g(\eta(y)) - \Phi (\rho_N(t,\tfrac{y}{N}))- \Phi^\prime (\rho_N(t,\tfrac{y}{N})) \big(\eta(y)  - \rho_N(t,\tfrac{y}{N})\big) \big] d \mu_{N,t} \\
= \int  \frac{1}{N^{d+\alpha}} \sum_{y\in\T_N^d} a_{N,t} (y)  \big[\bar{g}^\ell (\eta(y)) - \Phi (\bar{\rho}^\ell_N(t,\tfrac{y}{N})) - \Phi^\prime (\bar{\rho}^\ell_N(t,\tfrac{y}{N})) \big(\bar{\eta}^\ell(y)  - \bar{\rho}^\ell_N(t,\tfrac{y}{N})\big) \big] d \mu_{N,t} \\
+  \sum_{j=2}^3 \mathcal{E}_{N,j},
\end{multline}
where
\begin{align*}
\mathcal{E}_{N,2} =&  \frac{1}{N^{d+\alpha}} \sum_{y\in\T_N^d} a_{N,t} (y) \Big[ \Phi (\bar{\rho}^\ell_N(t,\tfrac{y}{N})) - \bar{\Phi}^\ell (\rho_N(t,\tfrac{y}{N})) \Big],\\
\mathcal{E}_{N,3} =&  \int  \frac{1}{N^{d+\alpha}} \sum_{y\in\T_N^d} a_{N,t} (y) \Big[ \Phi^\prime (\bar{\rho}^\ell_N(t,\tfrac{y}{N})) \big(\bar{\eta}^\ell(y) - \bar{\rho}^\ell_N(t,\tfrac{y}{N})\big) \\
&- \frac{1}{(2\ell+1)^d} \sum_{|x-y| \leq \ell} \Phi^\prime (\rho_N(t,\tfrac{x}{N})) \big(\eta(x)  - \rho_N(t,\tfrac{x}{N})\big)  \Big] d \mu_{N,t}.
\end{align*}
Since for any $|x-y| \leq \ell$, $|\rho_N(t,\tfrac{x}{N}) - \rho_N (t,\tfrac{y}{N}) |\leq C \ell N^{-1-\alpha}$,  it is easy to see
\[\big| \mathcal{E}_{N,j} \big| \leq C \ell N^{-1-2\alpha} = o(N^{-2\alpha}), \quad j = 2,3.\]

In order to deal with the first term on the right hand side of \eqref{b1}, we need the following one-block estimate, whose proof is  postponed to Section \ref{sec:oneblock}.

\begin{lemma}[One-block estimate]\label{lem:oneblock}
With the choice of $\ell(N)$ as above, for any continuous function $F: \R_+ \times \T^d \rightarrow \R$, 
\[\int_0^t E_{\mu_{N,s}} \Big[ \frac{1}{N^{d+\alpha}} \sum_{y\in\T_N^d} F \big( s, \tfrac{y}{N}\big) \tau_y V_g^\ell (\eta) \Big] ds = o(N^{-2\alpha}),\]
where
\[V_g^\ell (\eta) :=  \bar{g}^\ell (\eta(0)) - \Phi \big( \bar{\eta}^{\ell} (0)\big) .\]
\end{lemma}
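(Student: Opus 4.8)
The plan is to run the entropy method against the reference equilibrium $\nu_{\rho_*}^N$: first replace the time-dependent law $\mu_{N,s}$ by $\nu_{\rho_*}^N$, and then dispose of $V_g^\ell$ using the spectral gap estimate — or, in the alternative argument, the logarithmic Sobolev inequality — on cubes of side of order $\ell$. To carry out the first replacement, write $f_s = d\mu_{N,s}/d\nu_{\rho_*}^N$; the entropy inequality together with the variational characterization of the Dirichlet form yields, for every $A>0$,
\[ \int_0^t E_{\mu_{N,s}}\Big[\tfrac{1}{N^{d+\alpha}}\sum_{y\in\T_N^d} F\big(s,\tfrac yN\big)\tau_y V_g^\ell\Big]\,ds \le \frac1A\int_0^t D_N(f_s;\nu_{\rho_*}^N)\,ds + \frac1A\int_0^t \Lambda_A(s)\,ds, \]
where $\Lambda_A(s)$ is the supremum, over $\nu_{\rho_*}^N$-densities $h$, of $A\big\langle \tfrac{1}{N^{d+\alpha}}\sum_y F(s,\tfrac yN)\tau_y V_g^\ell, h\big\rangle_{\nu_{\rho_*}^N} - D_N(h;\nu_{\rho_*}^N)$. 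By Lemma \ref{lem:entropy} the first term is at most $CN^{d-2-2\alpha}/A$, so the task reduces to bounding $\Lambda_A(s)$ uniformly in $s$ and then optimizing over $A$.

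For the second, and main, step I would localize. Partition $\T_N^d$ into disjoint cubes of side of order $\ell$ (averaging over the base point of the partition so that the sites $y$ within distance $\ell$ of a cube boundary contribute only a lower order error), and observe that $D_N(h;\nu_{\rho_*}^N)$ dominates the sum over cubes of the Dirichlet forms built from the intra-cube bonds; a standard convexity argument then bounds $\Lambda_A(s)$ by the sum over cubes of the corresponding local suprema. Inside a cube I would decompose the restriction of $\nu_{\rho_*}^N$ over hyperplanes of fixed particle number and invoke the spectral gap estimate: under Assumption \ref{assump:g}(ii), the zero range generator on a cube of side of order $\ell$ has spectral gap at least $c\ell^{-2}$, uniformly in the particle density (\cite{landim1996spectral}). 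In the alternative proof one uses instead that the logarithmic Sobolev constant of that generator is bounded below by $c\ell^{-2}$ (\cite{pra2005logarithmic}); this yields exponential moment bounds directly and thereby avoids the truncation needed in the spectral gap variant, which is the reason the two variants call for slightly different choices of $\ell$.

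The one-block replacement proper then combines two facts on a cube of side of order $\ell$ with a fixed number of particles: the equivalence of ensembles, which gives that the canonical expectation of $\bar g^\ell(\eta(0)) - \Phi(\bar\eta^\ell(0))$ is $\Ocal(\ell^{-d})$; and a second order perturbation estimate built on the spectral gap, which is legitimate because the relevant local observable carries the small prefactor of order $N^{-(d+\alpha)}\ell^d$. Together they bound the local supremum by a term of order $A^2\ell^{d+2}N^{-2d-2\alpha} + AN^{-d-\alpha}$ per cube; since $g$, and hence $V_g^\ell$, need not be bounded, in the spectral gap variant this is preceded by truncating the configuration at a slowly growing level, with the truncation error absorbed via the entropy inequality and the exponential moments of $\nu_{\rho_*}^N$. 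Summing over the $\Ocal((N/\ell)^d)$ cubes and returning to the first display, one arrives at a bound of the form
\[ C\Big( \frac{N^{d-2-2\alpha}}{A} + \frac{A\ell^2}{N^{d+2\alpha}} + \frac{1}{N^{\alpha}\ell^{d}} \Big) \ + \ (\text{truncation error}); \]
optimizing over $A$ and taking $\ell$ as prescribed just before the statement of the lemma, every term becomes $o(N^{-2\alpha})$, and the requirement that all the resulting constraints on $\ell$ be simultaneously satisfiable for every $\alpha\in(0,1)$ is exactly where the hypothesis $d\ge2$ enters.

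I expect the main obstacle to be that the a priori entropy production bound of Lemma \ref{lem:entropy} is only $N^2\int_0^t D_N(f_s;\nu_{\rho_*}^N)\,ds = \Ocal(N^{d-2\alpha})$, i.e. smaller by a factor $N^{-2\alpha}$ than in the classical hydrodynamic regime. This forces $A$ to be taken large, which amplifies the $\ell^{-2}$ degradation coming from the spectral gap, so the cube size $\ell$ has to be threaded between the lower constraint $\ell^d\gg N^\alpha$ dictated by the equivalence of ensembles and the upper constraint dictated by the perturbative (and, in the spectral gap variant, truncation) step. Making the second order perturbation estimate for the unbounded observable $V_g^\ell$ genuinely quantitative, with constants uniform in a density range around $\rho_*$, is the technical heart of the matter; everything else is bookkeeping of Taylor remainders and boundary terms of the type already encountered above.
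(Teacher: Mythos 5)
Your proposal follows essentially the same route as the paper: truncate at a density level $M$ and absorb the tail via the entropy inequality and the exponential moments of $\nu_{\rho_*}^N$; reduce the truncated part to a localized variational/eigenvalue problem using convexity of the Dirichlet form; decompose over canonical ensembles; and conclude with the equivalence of ensembles (giving $\Ocal(\ell^{-d})$ for the canonical mean of $V^\ell_{g,M}$), the Rayleigh--Schr\"odinger perturbation bound combined with the spectral gap $c\ell^{-2}$ of \cite{landim1996spectral} and the variance bound ${\rm Var}(V^\ell_{g,M};\nu_{\ell,j})\le C\ell^{-d}$ of \cite[Lemma 4]{toth2002between} --- or alternatively with the logarithmic Sobolev inequality of \cite{pra2005logarithmic}. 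The only deviations are cosmetic and harmless: you control the linear term through $\int_0^t D_N(f_s;\nu_{\rho_*}^N)\,ds$ where the paper uses Feynman--Kac together with $H(\mu_{N,0}|\nu_{\rho_*}^N)$, and you partition $\T_N^d$ into disjoint cubes where the paper translation-averages the density; both choices produce the same orders after optimizing the free parameter subject to the smallness constraint of the perturbation lemma. One side remark is inaccurate, though it does not affect the argument: the logarithmic Sobolev variant does \emph{not} dispense with the cutoff --- the paper truncates there as well, since controlling $\log E_{\nu_{\ell,j}}[\exp\{\gamma N^{-\alpha}V^\ell_g\}]$ uniformly over all particle numbers $j$ for the unbounded observable $V^\ell_g$ is not automatic; the two variants call for different choices of $\ell$ because their final bounds carry different powers of $\ell$ and $N$, not because one avoids the truncation.
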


\medspace

By Lemma \ref{lem:oneblock} and \eqref{b1}, we have shown that
\begin{equation}\label{entropy3}
	h_N(t) \leq h_N (0) + \int^t_0 E_{\mu_{N,s}} \Big[ \frac{1}{N^{d+\alpha}} \sum_{y\in\T_N^d} a_{N,s} (y) W^\ell_y (s,\eta)\Big] ds + o(N^{-2\alpha}),
\end{equation}
where
\[W^\ell_y (t, \eta) := \Phi (\bar{\eta}^\ell (y)) -  \Phi (\bar{\rho}^\ell_N(t,\tfrac{y}{N})) - \Phi^\prime (\bar{\rho}^\ell_N(t,\tfrac{y}{N})) \big(\bar{\eta}^\ell(y)  - \bar{\rho}^\ell_N(t,\tfrac{y}{N})\big). \]

In order to deal with the integral in \eqref{entropy3}, we first state a lemma and refer the readers to  \cite[P.193]{toth2002between} for its proof.

\begin{lemma}[{\cite[Lemma 2]{toth2002between}}]\label{lem:ineqn1}
	Let $\zeta_i,\,i\geq 1,$ be independent random variables with mean zero. For any $\lambda > 0$, assume 
	\[\Lambda_i (\lambda) = \log E [e^{\lambda \zeta_i}] < \infty.\]  
	Assume further that there exist constants $\lambda_0 > 0$ and $C_0 > 0$ such that  $\Lambda_i (\lambda)  \leq C_0 \lambda^2$ for all $0 < \lambda < \lambda_0$ and for all $i \geq 1$. Let $G: \R \rightarrow \R_+$ be smooth and satisfy $G(u) \leq C_1 (|u| \wedge u^2)$ for some constant $C_1 > 0$. Denote $S_\ell = \sum_{i=1}^\ell \zeta_i$. Then,  there exist constants $\gamma_0 > 0$ and  $C_2 < \infty$ such that for any $\ell > \gamma_0^{-1}$, for any $0 < \gamma < \gamma_0$,
	\[\log E \big[ \exp \big\{ \gamma \ell G(S_\ell/ \ell)\big\}\big] < C_2.\]
\end{lemma}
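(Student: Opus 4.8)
The plan is to discard the specific shape of $G$ at once---only $G(u)\leq C_1(\abs{u}\wedge u^2)$ is used---and to reduce the claim to two Chernoff-type tail estimates for $S_\ell$, one valid in the ``moderate'' range $t\lesssim\ell$ and one in the ``large deviation'' range $t\gtrsim\ell$, glued at scale $t\asymp\ell$. From the hypothesis on $G$ one has the pointwise bound
\[\gamma\ell\, G(S_\ell/\ell)\leq C_1\gamma\big(\abs{S_\ell}\wedge (S_\ell^2/\ell)\big),\]
so I would split $\E\!\big[e^{\gamma\ell G(S_\ell/\ell)}\big]$ over $\{\abs{S_\ell}\leq\ell\}$, where the minimum equals $S_\ell^2/\ell$, and $\{\abs{S_\ell}>\ell\}$, where it equals $\abs{S_\ell}$. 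The only probabilistic input is that, by independence and $\Lambda_i(\lambda)\leq C_0\lambda^2$ for $\abs{\lambda}<\lambda_0$ (the two-sided version; in the zero range application the $\zeta_i$ are bounded below with uniformly bounded variances, so $-\zeta_i$ inherits the same estimate), one gets $\E[e^{\pm\lambda S_\ell}]\leq e^{C_0\lambda^2\ell}$ for $\abs{\lambda}<\lambda_0$; optimizing $\P(\abs{S_\ell}>t)\leq 2e^{-\lambda t+C_0\lambda^2\ell}$ over $\lambda\in(0,\lambda_0)$ yields, uniformly in $\ell\geq 1$, a Gaussian bound $\P(\abs{S_\ell}>t)\leq 2e^{-t^2/(4C_0\ell)}$ for $t\leq 2C_0\lambda_0\ell$ and an exponential bound $2e^{-\lambda_0 t/2}$ for larger $t$. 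In particular, for $t\geq\ell$ the exponent is at most $-c_1 t$ with $c_1:=\min\{(4C_0)^{-1},\lambda_0/4\}$, because $t^2/\ell\geq t$ there.

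On $\{\abs{S_\ell}\leq\ell\}$ I would use the layer-cake identity $\E[\psi(\abs{S_\ell});\abs{S_\ell}\leq\ell]\leq\psi(0)+\int_0^\ell\psi'(t)\P(\abs{S_\ell}>t)\,dt$ with $\psi(t)=e^{C_1\gamma t^2/\ell}$; inserting the tail bound, the integrand is $\leq(4C_1\gamma t/\ell)e^{-(1/(4C_0)-C_1\gamma)t^2/\ell}$ for $t\leq 2C_0\lambda_0\ell$ and, using $t^2/\ell\leq t$ when $t\leq\ell$, is $\leq(4C_1\gamma t/\ell)e^{-(\lambda_0/2-C_1\gamma)t}$ on the remaining range, so both pieces integrate to a bounded multiple of $\gamma$, uniformly in $\ell$, once $C_1\gamma<\min\{1/(8C_0),\lambda_0/4\}$. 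On $\{\abs{S_\ell}>\ell\}$ I would use $\E[\psi(\abs{S_\ell});\abs{S_\ell}>\ell]=\psi(\ell)\P(\abs{S_\ell}>\ell)+\int_\ell^\infty\psi'(t)\P(\abs{S_\ell}>t)\,dt$ with $\psi(t)=e^{C_1\gamma t}$; since $\P(\abs{S_\ell}>t)\leq 2e^{-c_1 t}$ for $t\geq\ell$, this is $\leq 2e^{(C_1\gamma-c_1)\ell}+2C_1\gamma\int_\ell^\infty e^{(C_1\gamma-c_1)t}\,dt\leq C$ provided $C_1\gamma<c_1/2$. Taking $\gamma_0:=\min\{1/(8C_0C_1),\,\lambda_0/(4C_1),\,c_1/(2C_1)\}$ and letting $C_2$ absorb the finitely many constants produced above proves the lemma; note the hypothesis $\ell>\gamma_0^{-1}$ is not actually needed along this route.

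The substantive point---and the step I expect to be delicate to calibrate---is the crossover at $t\asymp\ell$. For $t\lesssim\ell$ the partial sum is genuinely sub-Gaussian on scale $\sqrt\ell$, and its Gaussian tail defeats the quadratic growth of $e^{C_1\gamma S_\ell^2/\ell}$; for $t\gtrsim\ell$ one can no longer run the Chernoff argument with $\lambda\sim t/\ell$ (that would exceed $\lambda_0$), so only the capped rate $\lambda_0$ is available, giving linear-in-$t$ decay---which is exactly enough because $G$, and hence the exponent $\gamma\ell G(S_\ell/\ell)$, grows only linearly in $S_\ell$ in that regime. Making the two regimes match with constants independent of $\ell$ is the whole content of the estimate; the remainder is bookkeeping with the layer-cake formula, together with the harmless edge cases ($2C_0\lambda_0\geq 1$ versus $<1$, and small $\ell$) and the remark that the possibly one-sided exponential-moment assumption is repaired by boundedness below of the $\zeta_i$.
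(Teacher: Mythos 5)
Your proof is correct, but note that the paper does not actually prove this lemma: it cites it from T\'oth--Valk\'o and refers the reader to \cite[P.~193]{toth2002between}, so there is no in-paper argument to compare against. Your self-contained route --- bound $\gamma\ell G(S_\ell/\ell)$ by $C_1\gamma(|S_\ell|\wedge S_\ell^2/\ell)$, split at $|S_\ell|=\ell$, derive a sub-Gaussian tail $2e^{-t^2/(4C_0\ell)}$ for $t\le 2C_0\lambda_0\ell$ and a linear-rate tail $2e^{-\lambda_0 t/2}$ beyond, then integrate against the layer-cake formula with $\psi(t)=e^{C_1\gamma t^2/\ell}$ resp.\ $e^{C_1\gamma t}$ --- is essentially the standard Cram\'er/Chernoff mechanism that underlies the original Lemma 2 of \cite{toth2002between}, and all the calibrations check out: the Gaussian tail beats the quadratic exponent once $C_1\gamma<1/(8C_0)$, the crossover window $2C_0\lambda_0\ell<t\le\ell$ is handled by $t^2/\ell\le t$, and the large-deviation regime closes once $C_1\gamma<c_1/2$. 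Your observation that the hypothesis $\ell>\gamma_0^{-1}$ is not needed for this bound is also accurate.

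The one point worth making explicit is the one you flag yourself: as transcribed in the paper, the hypothesis $\Lambda_i(\lambda)\le C_0\lambda^2$ is stated only for $\lambda\in(0,\lambda_0)$, which controls only the right tail of $S_\ell$; since $G$ grows linearly in $|u|$ for large negative $u$ as well, the conclusion genuinely requires control of the left tail too (a heavy left tail would make $E[e^{C_1\gamma|S_\ell|}]$ diverge). So your proof really uses the two-sided version of the assumption. This is a defect of the statement rather than of your argument, and it is harmless in the application: there $\zeta_i=\eta(x)-\rho_N(t,x/N)$ is bounded below, and the paper's own inequality \eqref{f1} yields $\log E[e^{\lambda\zeta_i}]\le C\lambda^2$ for $|\lambda|$ small of either sign. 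It would strengthen your write-up to state the two-sided hypothesis up front rather than as a parenthetical, but the mathematics is sound.
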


\medspace

Now we continue to treat \eqref{entropy3}. By entropy inequality, for any $\gamma > 0$, 
\[E_{\mu_{N,s}} \Big[ \frac{1}{N^{d+\alpha}} \sum_{y\in\T_N^d} a_{N,s} (y)  W^\ell_y (s, \eta)  \Big] \leq \frac{h_N(s)}{\gamma} + \frac{1}{\gamma N^d} \log E_{\nu_{N,s}} \Big[ \exp \Big\{ \frac{\gamma}{N^\alpha} \sum_{y\in\T_N^d} a_{N,s} (y)  W^\ell_y (s, \eta)\Big\}\Big]. \]
Since $W^\ell_y (s, \eta)$ and $ W^\ell_{y^\prime} (s, \eta) $ are independent under $\nu_{N,s}$ if $|y-y^\prime| > 2 \ell$, by H{\"o}lder's inequality, the second term in the last line is bounded by
\[ \frac{1}{\gamma N^d (2\ell+1)^d} \sum_{y\in\T_N^d} \log E_{\nu_{N,s}} \Big[ \exp \Big\{ \frac{\gamma (2\ell+1)^d}{N^\alpha} a_{N,s} (y) W^\ell_y (s, \eta) \Big\}\Big].\]
Take $\gamma = \gamma_0 N^\alpha$ for some fixed but small enough $\gamma_0$ and take
\[G(u) = \Phi (u+\bar{\rho}_N^\ell (t,y)) -  \Phi (\bar{\rho}_N^\ell (t,\tfrac{y}{N})) -  \Phi^\prime (\bar{\rho}_N^\ell (t,\tfrac{y}{N})) u, \quad u =\bar{\eta}^\ell (y) - \bar{\rho}_N^\ell (t,\tfrac{y}{N})\]
in Lemma \ref{lem:ineqn1}, then the above term is bounded by $C \ell^{-d} N^{-\alpha}$ for some constant $C = C(\gamma_0)$.  Therefore,
\begin{equation}\label{entropy4}
h_N(t) \leq h_N (0) + \frac{C}{N^\alpha} \int_0^t h_N (s) ds +  \frac{C}{\ell^d N^\alpha} + o(N^{-2\alpha}).
\end{equation}
With of choices of $\ell$ above, 
\[\frac{1}{\ell^d N^\alpha} = \Ocal(N^{-\frac{(4d+2)\alpha+d^2}{2(d+1)}}) \quad \text{or} \quad  \frac{1}{\ell^d N^\alpha} =  \Ocal (N^{-\frac{(2d+1)\alpha + d}{d+1}}).\]
In both cases, the third term in \eqref{entropy4} is of order $o(N^{-2\alpha})$ if $d \geq 2$ and $\alpha \in (0,1)$.  We conclude the proof of Theorem \ref{thm} by using Gr{\"o}nwall's inequality.

\section{One-block Estimate}\label{sec:oneblock}

In this section, we prove Lemma \ref{lem:oneblock}. In order to cut off large densities, for each $M > 0$, let 
\[V_{g,M}^\ell (\eta) =V_g^\ell ( \eta) \mathbf{1}_{\{ \bar{\eta}^{\ell} (0) \leq M\}}. \]
Then, the expression in Lemma \ref{lem:oneblock} equals
\begin{multline}\label{oneblock0}
\int_0^t E_{\mu_{N,s}} \Big[ \frac{1}{N^{d+\alpha}} \sum_{y\in\T_N^d} F \big( s, \tfrac{y}{N}\big) \tau_y \big(V_g^\ell (\eta) \mathbf{1}_{\{ \bar{\eta}^{\ell} (0) > M\}} \big) \Big] ds\\
+ \int_0^t E_{\mu_{N,s}} \Big[ \frac{1}{N^{d+\alpha}} \sum_{y\in\T_N^d} F \big( s, \tfrac{y}{N}\big) \tau_y V_{g,M}^\ell (\eta) \Big] ds.
\end{multline}

\subsection{Cut off large densities.}  In this subsection, we bound the first term  in \eqref{oneblock0}.

\begin{lemma}\label{cutoff} There exists some constant $C=C(M,F,t)$ such that for $N$ large enough,
\[\int_0^t E_{\mu_{N,s}} \Big[ \frac{1}{N^{d+\alpha}} \sum_{y\in\T_N^d} F \big( s, \tfrac{y}{N}\big) \tau_y \big(V_g^\ell (\eta) \mathbf{1}_{\{ \bar{\eta}^{\ell} (0) > M\}} \big) \Big] ds \leq C  \Big( \frac{\log N}{N^{3\alpha}} + \frac{\log N}{\ell^d N^\alpha} \exp \{ - C  \ell^d\} \Big).\]
In particular, with the choices of $\ell=\ell(N)$ given in Subsection \ref{subsec:proof}, the above term has order $o(N^{-2\alpha})$.
\end{lemma}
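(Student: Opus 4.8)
The plan is to transfer the expectation from $\mu_{N,s}$ to the equilibrium measure $\nu_{\rho_*}^N$ via the entropy inequality, exploiting that $\{\bar\eta^\ell(0)>M\}$ is a large deviation event for $\nu_{\rho_*}^N$ whose probability decays like $\exp\{-c_M\ell^d\}$. Two crude a priori bounds are needed first. From Assumption \ref{assump:g}(i) and $g(0)=0$ one gets $g(k)\le a_0 k$ for all $k\ge 0$; and from the elementary identity $E_{\bar\nu^N_\varphi}[g(\eta(0))]=\varphi$ (immediate from the explicit marginals of $\bar\nu^N_\varphi$) it follows that $\Phi(\rho)=\varphi\le a_0 E_{\bar\nu^N_\varphi}[\eta(0)]=a_0\rho$ for all $\rho\ge0$. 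Hence $|V_g^\ell(\eta)|=|\bar g^\ell(\eta(0))-\Phi(\bar\eta^\ell(0))|\le a_0\bar\eta^\ell(0)$, and in particular $\tau_y|V_g^\ell|\le a_0\bar\eta^\ell(y)$. Bounding $|F|$ by $\|F\|_\infty:=\sup_{[0,t]\times\T^d}|F|$, it then suffices to bound from above, uniformly in $s\le t$, the quantity $E_{\mu_{N,s}}[Y_s]$, where
\[Y_s:=\frac{\|F\|_\infty}{N^{d+\alpha}}\sum_{y\in\T_N^d}(\tau_y|V_g^\ell|)\,\mathbf{1}_{\{\bar\eta^\ell(y)>M\}};\]
the matching lower bound follows by replacing $F$ with $-F$.

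For $\beta_N:=N^{d+\alpha}/\log N$, the entropy inequality (see \cite[Appendix 1.8]{klscaling}) yields
\[E_{\mu_{N,s}}[Y_s]\le\frac{H(\mu_{N,s}\mid\nu_{\rho_*}^N)}{\beta_N}+\frac1{\beta_N}\log E_{\nu_{\rho_*}^N}\big[e^{\beta_N Y_s}\big].\]
Each summand $(\tau_y|V_g^\ell|)\mathbf{1}_{\{\bar\eta^\ell(y)>M\}}$ depends only on $\{\eta(z):|z-y|\le\ell\}$, so two of them are independent under the product measure $\nu_{\rho_*}^N$ whenever $|y-y'|>2\ell$. Partitioning $\T_N^d$ into $(2\ell+1)^d$ classes of pairwise $(>2\ell)$--separated sites and applying H\"older's inequality, exactly as in the passage preceding \eqref{entropy4}, together with the translation invariance of $\nu_{\rho_*}^N$, one bounds $\log E_{\nu_{\rho_*}^N}[e^{\beta_N Y_s}]$ by
\[\frac{N^d}{(2\ell+1)^d}\,\log E_{\nu_{\rho_*}^N}\Big[\exp\big\{\lambda_N\,|V_g^\ell(\eta)|\,\mathbf{1}_{\{\bar\eta^\ell(0)>M\}}\big\}\Big],\qquad \lambda_N:=\frac{(2\ell+1)^d\|F\|_\infty}{\log N}.\]

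It remains to estimate this single--block term. Using $\log(1+x)\le x$ and $|V_g^\ell|\le a_0\bar\eta^\ell(0)$, it is at most $E_{\nu_{\rho_*}^N}[e^{a_0\lambda_N\bar\eta^\ell(0)}\mathbf{1}_{\{\bar\eta^\ell(0)>M\}}]$, hence, by the Cauchy--Schwarz inequality, at most $E_{\nu_{\rho_*}^N}[e^{2a_0\lambda_N\bar\eta^\ell(0)}]^{1/2}\,\P_{\nu_{\rho_*}^N}(\bar\eta^\ell(0)>M)^{1/2}$. Since $\lambda_N/(2\ell+1)^d=\|F\|_\infty/\log N\to0$ and $\eta(0)$ has all exponential moments under $\nu_{\rho_*}^N$, the first factor is at most $\exp\{C(2\ell+1)^d/\log N\}$ for $N$ large; and, for $M$ larger than an $N$--independent threshold (the relevant regime, since $M$ is sent to infinity in the proof of Lemma \ref{lem:oneblock}), a Chernoff bound gives $\P_{\nu_{\rho_*}^N}(\bar\eta^\ell(0)>M)\le\exp\{-c_M(2\ell+1)^d\}$ with $c_M>0$. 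Thus, for $N$ so large that $C/\log N<c_M/4$, the single--block term is at most $\exp\{-C\ell^d\}$. Substituting this and the bound $H(\mu_{N,s}\mid\nu_{\rho_*}^N)\le CN^{d-2\alpha}$ from Lemma \ref{lem:entropy} (uniform in $s$) into the two displays gives $E_{\mu_{N,s}}[Y_s]\le C\log N\,N^{-3\alpha}+\log N\,N^{-\alpha}(2\ell+1)^{-d}\exp\{-C\ell^d\}$; integrating over $s\in[0,t]$ and using $(2\ell+1)^d\ge\ell^d$ proves the displayed estimate. The final assertion follows at once: $N^{-3\alpha}\log N=o(N^{-2\alpha})$ since $\alpha>0$, and for both choices of $\ell=\ell(N)$ one has $\ell\to\infty$ polynomially in $N$, so $\exp\{-C\ell^d\}$ decays faster than any power of $N$.

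The one delicate point is the balancing just made: the large deviation gain $\exp\{-c_M\ell^d\}$ must dominate the cost $\exp\{C\ell^d/\log N\}$ of the (nearly exponential) moments of $\bar\eta^\ell(0)$. This is what forces the entropy parameter $\beta_N$ to carry the extra $\log N$ — harmless, since the resulting first term is only of order $N^{-3\alpha}\log N$ — and it is why both $M$ and $N$ must be taken large. The remaining steps are routine bookkeeping.
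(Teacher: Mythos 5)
Your proof is correct and follows essentially the same route as the paper's: the entropy inequality with $\gamma=N^{d+\alpha}/\log N$, H\"older's inequality over blocks separated by more than $2\ell$, the bound $\log(1+x)\le x$, Cauchy--Schwarz to separate the exponential moment of $\bar\eta^\ell(0)$ from $\P_{\nu_{\rho_*}^N}(\bar\eta^\ell(0)>M)$, and a Chernoff/large-deviation estimate for the latter. The only differences are cosmetic (you derive $|V_g^\ell|\le C\bar\eta^\ell(0)$ slightly more explicitly, and you make the implicit requirement that $M$ exceed $\rho_*$ so that the rate function is positive), so nothing further is needed.
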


\begin{proof}
By Assumption \ref{assump:g} $(i)$,  $\big| V_g^\ell (\eta) \big| \leq C(a_0) \bar{\eta}^\ell (0)$. Therefore, the  term on the right hand side in the lemma  is bounded by
\[C \|F\|_\infty \int_0^t E_{\mu_{N,s}}  \Big[  \frac{1}{N^{d+\alpha}} \sum_{y\in\T_N^d} \bar{\eta}^\ell (y)  \mathbf{1}_{\{ \bar{\eta}^{\ell} (y) > M\}}  \Big] ds.\] 
By entropy inequality \cite[Appendix 1.8]{klscaling}, for any $\gamma > 0$, the integral above is bounded by
\begin{equation}\label{cutoff1}
\int_0^t \frac{H (\mu_{N,s} | \nu_{\rho_*}^N)}{\gamma} ds + \frac{t}{\gamma} \log E_{\nu_{\rho_*}^N} \Big[ \exp \Big\{   \frac{\gamma}{N^{d+\alpha}} \sum_{y\in\T_N^d} \bar{\eta}^\ell (y)  \mathbf{1}_{\{ \bar{\eta}^{\ell} (y) > M\}} \Big\}\Big]
\end{equation}
By Lemma \ref{lem:entropy}, for any $0 \leq s \leq t$,
\[H (\mu_{N,s} | \nu_{\rho_*}^N) \leq H (\mu_{N,0} | \nu_{\rho_*}^N)  \leq C N^{d-2\alpha}.\]
For the second term in \eqref{cutoff1}, since $\bar{\eta}^\ell (y)$ and $\bar{\eta}^\ell (z)$ are independent if $|y-z| > 2 \ell$,  by H{\" o}lder's inequality,
\begin{multline*}
\log E_{\nu_{\rho_*}^N} \Big[ \exp \Big\{   \frac{\gamma}{N^{d+\alpha}} \sum_{y\in\T_N^d} \bar{\eta}^\ell (y)  \mathbf{1}_{\{ \bar{\eta}^{\ell} (y) > M\}} \Big\} \Big] \\
\leq \frac{N^d}{(2\ell+1)^d}   \log E_{\nu_{\rho_*}^N} \Big[ \exp \Big\{   \frac{\gamma (2\ell+1)^d}{N^{d+\alpha}} \bar{\eta}^\ell (0)  \mathbf{1}_{\{ \bar{\eta}^{\ell} (0) > M\}} \Big\} \Big]
\end{multline*}
By using the basic inequality $\log (1+x) \leq x$, we bound the last line by
\begin{multline*}
\frac{N^d}{(2\ell+1)^d}  \log E_{\nu_{\rho_*}^N} \Big[ 1+ \exp \Big\{   \frac{\gamma (2\ell+1)^d}{N^{d+\alpha}} \bar{\eta}^\ell (0) \Big\}   \mathbf{1}_{\{ \bar{\eta}^{\ell} (0) > M\}} \Big]\\
\leq \frac{N^d}{(2\ell+1)^d}   E_{\nu_{\rho_*}^N} \Big[ \exp \Big\{   \frac{\gamma (2\ell+1)^d}{N^{d+\alpha}} \bar{\eta}^\ell (0)  \Big\}   \mathbf{1}_{\{ \bar{\eta}^{\ell} (0) > M\}} \Big].
\end{multline*}
By \CS,  the expectation in the last line is bounded by
\begin{equation}\label{cutoff2}
	E_{\nu_{\rho_*}^N} \Big[ \exp \Big\{   \frac{2 \gamma (2\ell+1)^d}{N^{d+\alpha}} \bar{\eta}^\ell (0)  \Big\} \Big]^{1/2}  P_{\nu_{\rho_*}^N} \Big( \bar{\eta}^{\ell} (0) > M \Big)^{1/2}.
\end{equation}
For $\lambda \geq  0$ and $\theta \in \R$,  define
\[\Lambda (\lambda) := \log E_{\nu_{\rho_*}^N} \big[ e^{\lambda \eta(0)}\big], \quad I(\theta) := \sup_{\lambda} \{ \lambda \theta - \Lambda (\lambda)\}.\]
Recall under Assumption \ref{assump:g}, $\Lambda (\lambda) < \infty$ for any $\lambda \geq 0$. By standard large deviation estimates,  we may bound \eqref{cutoff2} by
\[	\exp \Big\{ C(2\ell+1)^d \big[ \Lambda (2\gamma / N^{d+\alpha}) - I(M) \big]\Big\}.\]
To sum up, we bound \eqref{cutoff1} by 
\begin{equation}\label{cutoff3}
	\frac{C \|F\|_\infty tN^{d}}{\gamma N^{2\alpha}} + \frac{ C \|F\|_\infty t N^d}{\gamma (2\ell+1)^d} \exp \Big\{ C(2\ell+1)^d \big[ \Lambda (2\gamma / N^{d+\alpha}) - I(M) \big]\Big\}.
\end{equation}
Now, take $\gamma = N^{d+\alpha} / \log N$. Since $\lim_{\lambda \rightarrow 0} \Lambda (\lambda) = 0$, for $N$ large enough, \[\Lambda (2\gamma / N^{d+\alpha}) = \Lambda(2/\log N) < I(M)/2.\] Hence, \eqref{cutoff3} is bounded by
\[C \|F\|_\infty t \Big( \frac{\log N}{N^{3\alpha}} + \frac{\log N}{\ell^d N^\alpha} \exp \{ - C I (M) \ell^d\} \Big).\]
This concludes the proof of the lemma.
\end{proof}

\subsection{Estimates by spectral gap inequality.} In this subsection, we bound the second term in \eqref{oneblock0} by using the spectral gap estimates for the zero range process.

\begin{lemma}\label{lem:spectralgap}
Let $F: \R_+ \times \T^d \rightarrow \R$ be continuous. There exists $\varepsilon_0 > 0$ such that for any $\ell$ satisfying $M \ell^{d+1} < \varepsilon_0 N^{d/2 + \alpha}$, there exists $C = C(M,F,t)$ such that
\begin{equation}\label{ob}
\int_0^t E_{\mu_{N,s}} \Big[ \frac{1}{N^{d+\alpha}} \sum_{y\in\T_N^d} F \big( s,\tfrac{y}{N}\big) \tau_y V_{g,M}^\ell  (\eta) \Big] ds \leq C \Big( \frac{1}{N^\alpha \ell^d} + \frac{ \ell}{N^{2\alpha+d/2}}\Big).
\end{equation}
In particular, by taking 
\[\ell = \Big(\frac{\varepsilon_0}{2M}\Big)^{1/(d+1)} N^{\frac{\alpha+d/2}{d+1}},\]
the left hand side in \eqref{ob} is bounded by $C N^{-\frac{(4d+2)\alpha + d^2}{2(d+1)}} = o(N^{-2\alpha})$.
\end{lemma}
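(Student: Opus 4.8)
The plan is to split the already-truncated observable $V_{g,M}^\ell$, on each box $B_\ell:=\{x\in\T_N^d:|x|\le\ell\}$, into its conditional mean given the number of particles in $B_\ell$ and the complementary fluctuation, and to treat the two pieces by entirely different tools. Write $\langle\cdot\rangle_{\ell,k}$ for expectation under the canonical measure on $B_\ell$ with $k$ particles, and decompose $V_{g,M}^\ell=\widehat V^\ell+\overline V^\ell$ with
\[
\overline V^\ell(\eta):=\Big(\big\langle \bar g^\ell(\eta(0))\big\rangle_{\ell,\,\sum_{x\in B_\ell}\eta(x)}-\Phi\big(\bar\eta^\ell(0)\big)\Big)\mathbf 1_{\{\bar\eta^\ell(0)\le M\}},
\]
so that $\overline V^\ell$ is a function of $\bar\eta^\ell(0)$ alone while $\widehat V^\ell$ has zero mean under every canonical measure on $B_\ell$. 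First I would dispose of $\overline V^\ell$ by the equivalence of ensembles for the zero range process: starting from the identity $E_{\bar\nu^N_\varphi}[g(\eta(0))]=\varphi$ and a local central limit estimate for the canonical measures, uniform over densities in the compact set $[0,M]$, one gets $\big|\langle \bar g^\ell(\eta(0))\rangle_{\ell,k}-\Phi(k/|B_\ell|)\big|\le C/|B_\ell|$ whenever $k/|B_\ell|\le M$, so $|\overline V^\ell|\le C/\ell^d$ deterministically. Since $\overline V^\ell$ depends on $\eta$ only through the empirical density, bounding $|F|$ by its supremum on $[0,t]\times\T^d$ shows that $\overline V^\ell$ contributes at most $C(M,F,t)\,N^{-\alpha}\ell^{-d}$ to the left side of \eqref{ob}; this is the first term of the asserted bound.

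For the fluctuation $\widehat V^\ell$ --- which is bounded on its support by Assumption \ref{assump:g} $(i)$ and is canonically centered in every box --- I would use the entropy inequality together with the spectral gap. The entropy inequality relative to $\nu_{\rho_*}^N$, followed by the Feynman--Kac formula for the accelerated stationary process, bounds the left side of \eqref{ob} by
\[
\frac{H(\mu_{N,0}|\nu_{\rho_*}^N)}{\gamma}+\frac1\gamma\int_0^t\sup_{f}\Big\{\gamma\,\big\langle W_{N,s},f\big\rangle_{\nu_{\rho_*}^N}-N^2 D_N\big(f;\nu_{\rho_*}^N\big)\Big\}\,ds
\]
for every $\gamma>0$, where $W_{N,s}:=N^{-d-\alpha}\sum_{y\in\T_N^d}F(s,\tfrac yN)\,\tau_y\widehat V^\ell$ and the supremum runs over $\nu_{\rho_*}^N$-densities $f$. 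Each translate $\tau_y\widehat V^\ell$ is localized in a box of side $2\ell+1$ and centered on the canonical shells of that box, while the spectral gap of the zero range process in an $\ell$-box is of order $\ell^{-2}$ \emph{uniformly in the density} --- this is exactly where Assumption \ref{assump:g} $(ii)$ and \cite{landim1996spectral} are invoked --- so the accelerated generator $N^2\gen$ has a local gap of order $N^2\ell^{-2}$. Applying, block by block, the standard bound on the principal eigenvalue of a perturbed generator --- legitimate precisely because, under the hypothesis $M\ell^{d+1}<\varepsilon_0 N^{d/2+\alpha}$, the local part of the perturbation $\gamma W_{N,s}$ is small compared with that local gap --- controls the supremum in terms of the box spectral gap and the $L^2$-size of $\widehat V^\ell$, itself $O(\ell^{-d})$ by equivalence of ensembles since $\bar g^\ell(\eta(0))$ is an average over $|B_\ell|$ sites. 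Inserting $H(\mu_{N,0}|\nu_{\rho_*}^N)\le CN^{d-2\alpha}$ from Lemma \ref{lem:entropy} and optimizing over $\gamma$ then produces the second term $C\ell\,N^{-2\alpha-d/2}$. Finally, with $\ell=(\varepsilon_0/2M)^{1/(d+1)}N^{(\alpha+d/2)/(d+1)}$ the two contributions $N^{-\alpha}\ell^{-d}$ and $\ell\,N^{-2\alpha-d/2}$ coincide, of common order $N^{-((4d+2)\alpha+d^2)/(2(d+1))}$; this exponent exceeds $2\alpha$ precisely when $d^2>2\alpha$, which holds for all $d\ge 2$ and $\alpha\in(0,1)$, so the bound is $o(N^{-2\alpha})$.

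I expect the principal-eigenvalue (equivalently, block Poisson-equation) estimate for $\widehat V^\ell$ to be the main obstacle. It requires the zero range spectral gap in an $\ell$-box to have the correct order $\ell^{-2}$ \emph{uniformly over the truncation range} $[0,M]$ of densities; it requires equivalence of ensembles, with explicit $O(1/|B_\ell|)$ rates uniform over that range, for both the conditional mean and the conditional variance of $\bar g^\ell(\eta(0))$; and it requires tracking precisely how the free parameters $\gamma$ and $\ell$ must be tied to the admissibility condition that the perturbation be small compared with the local gap --- which is what forces the constraint $M\ell^{d+1}<\varepsilon_0 N^{d/2+\alpha}$ and, with it, both the particular choice of $\ell$ and the power $N^{-2\alpha-d/2}$. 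The remaining inputs --- the entropy and entropy-production bounds of Lemma \ref{lem:entropy}, the Feynman--Kac step, and the bookkeeping of box overlaps --- are routine.
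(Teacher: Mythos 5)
Your proposal is correct and follows essentially the same route as the paper: entropy inequality plus Feynman--Kac, reduction to a single $\ell$-block, splitting $V_{g,M}^\ell$ into a canonically centered part (handled by the perturbed principal-eigenvalue bound with the uniform $\ell^{-2}$ spectral gap of \cite{landim1996spectral} and the canonical variance bound of order $\ell^{-d}$) and the canonical mean minus $\Phi(\bar\eta^\ell(0))$ (handled by equivalence of ensembles), then optimizing $\gamma$ under the smallness condition $M\ell^{d+1}<\varepsilon_0 N^{d/2+\alpha}$. The only cosmetic difference is that you peel off the equivalence-of-ensembles piece deterministically before the entropy inequality, whereas the paper performs the split after reducing to the block variational problem; the estimates and exponents are identical.
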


\begin{proof}
By entropy inequality, for any $\gamma > 0$,  
\begin{multline}\label{oneblock1}
	\int_0^t E_{\mu_{N,s}} \Big[ \frac{1}{N^{d+\alpha}} \sum_{y\in\T_N^d} F \big( s,\tfrac{y}{N}\big) \tau_y V_{g,M}^\ell  \Big] ds \leq \frac{H(\mu_{N,0} | \nu_{\rho_*}^N)}{\gamma N^d} \\
	+ \frac{1}{\gamma N^d} \log \mathbb{E}_{\nu_{\rho_*}^N} \Big[ \exp \Big\{ \int_0^t \frac{\gamma \|F\|_\infty}{N^\alpha}   \sum_{y\in\T_N^d} \big|  \tau_y V_{g,M}^\ell (\eta_s) \big| ds \Big\}\Big].
\end{multline}
By Lemma \ref{lem:entropy}, $H(\mu_{N,0} | \nu_{\rho_*}^N) \leq C N^{d-2\alpha}$.  Using the basic inequality 
\[e^{|x|} \leq e^x + e^{-x}, \forall x, \quad \text{and} \quad \log (a+b) \leq \log 2 + \max \{\log a, \log b\}, \forall a, b > 0,\]
we could remove the absolute value inside the exponential in the second term above. By Feynman-Kac formula \cite[Lemma A1.7.2]{klscaling}, the second term in \eqref{oneblock1} is bounded by $t \Gamma_N/(\gamma N^d)$,
where $\Gamma_N$ is the largest eigenvalue of the operator $N^2 \mathscr{L}_N^s +  \frac{\gamma \|F\|_\infty}{N^\alpha}   \sum_{y\in\T_N^d}  \tau_y V_{g,M}^\ell (\eta)$ with $\gen^s := (\gen + \gen^*)/2$ being the symmetric part of $\gen$ in $L^2 (\nu_{\rho_*}^N)$. Moreover, $t \Gamma_N/(\gamma N^d)$ may be written as the following variational form
\begin{equation}\label{oneblock2}
t \, \sup_{f:\nu_{\rho_*}^N-density} \Big\{ \frac{\|F\|_\infty}{N^{d+\alpha}} E_{\nu_{\rho_*}^N} \Big[   \sum_{y\in\T_N^d}  \tau_y V_{g,M}^\ell (\eta) f(\eta)  \Big]  - \frac{N^2}{\gamma N^d} D_N (f;\nu_{\rho_*}^N)\Big\}.
\end{equation}
Above,  recall $D_N (f;\nu_{\rho_*}^N)$ is the Dirichlet form of the density $f$ with respect to the measure $\nu_{\rho_*}^N$,
\[D_N (f;\nu_{\rho_*}^N) := \<\sqrt{f} (-\gen)\sqrt{f}\>_{\nu_{\rho_*}^N}  = \<\sqrt{f} (-\mathscr{L}_N^s)\sqrt{f}\>_{\nu_{\rho_*}^N}. \]
 For any $\nu_{\rho_*}^N$--density $f$, denote
\[\bar{f} = \frac{1}{N^d} \sum_{y\in\T_N^d} \tau_y f.\]
By translation invariance and convexity of the Dirichlet form,
\[D_N (f;\nu_{\rho_*}^N) \geq D_N (\bar{f};\nu_{\rho_*}^N).\]
This permits us to bound \eqref{oneblock2} by
\begin{equation}\label{oneblock3}
t \, \sup_{f} \Big\{  \frac{\|F\|_\infty}{N^{\alpha}} E_{\nu_{\rho_*}^N} \Big[  V_{g,M}^\ell (\eta) f(\eta)  \Big]  - \frac{N^2}{\gamma N^d} D_N (f;\nu_{\rho_*}^N)\Big\}.
\end{equation}
Above, the supremum is over all translation invariant $\nu_{\rho_*}^N$--densities $f$. 

Denote by $\Lambda_\ell^d := \{-\ell,-\ell+1, \ldots, \ell\}^d$ the $d$-dimensional cube of length $2\ell+1$ and denote $\Omega_\ell^d := (\Lambda_\ell^d)^\N$. Let $\nu_{\rho_*}^\ell$ be the product measure on $\Omega_\ell^d$ with particle density $\rho_*$. Denote $f_\ell := E_{\nu_{\rho_*}^N}  [f | \mathcal{F}_\ell]$ with $\mathcal{F}_\ell$ being the $\sigma$-algebra generated by $(\eta(x), x\in \Lambda_\ell^d)$.   Let 
\begin{align*}
D_\ell (f;\nu_{\rho_*}^N) &= \sum_{x,y \in \Lambda_\ell^d} D_{x,y} (f;\nu_{\rho_*}^N), \\
 D_\ell (f_\ell;\nu_{\rho_*}^\ell) &= \<\sqrt{f}(-\mathscr{L}_\ell^s) \sqrt{f}\>_{\nu_{\rho_*}^\ell}\\
 &=  \frac{1}{2} \sum_{x,y \in \Lambda_\ell^d} \int_{\Omega_\ell^d} g(\eta(x)) s(y-x) \big[ \sqrt{f(\eta^{x,y})} - \sqrt{f(\eta)} \big]^2 d \nu_{\rho_*}^\ell,
\end{align*}
where the symmetric generator $\mathscr{L}_\ell^s$ acts on functions $f:\Omega_\ell^d \rightarrow \R$ as
\[\mathscr{L}_\ell^s f (\eta) = \sum_{x,y\in \Lambda_\ell^d} g(\eta(x)) s(y-x) \big[ f(\eta^{x,y}) - f(\eta) \big].\]
Since $f$ is translation invariant, by convexity of the Dirichlet form,
\[D_N (f;\nu_{\rho_*}^N) \geq\frac{N^d}{(2\ell+1)^d} D_\ell (f;\nu_{\rho_*}^N) \geq  \frac{N^d}{(2\ell+1)^d} D_\ell (f_\ell;\nu_{\rho_*}^\ell).\]
Therefore, \eqref{oneblock3} is bounded by
\begin{equation}\label{oneblock4}
	t \, \sup_{f} \Big\{  \frac{\|F\|_\infty}{N^{\alpha}} E_{\nu_{\rho_*}^\ell} \Big[  V_{g,M}^\ell (\eta) f_\ell (\eta)  \Big]  - \frac{N^2}{\gamma (2\ell+1)^d} D_\ell (f_\ell;\nu_{\rho_*}^\ell)\Big\}.
\end{equation}

To decompose the above term on hyperplanes with fixed number of particles, we first introduce the following notations.   For $j \geq 0$, let
\[ \Omega_{\ell,j}^d = \big\{ \eta \in \Omega_\ell^d: \sum_{x \in \Lambda_\ell^d} \eta (x) = j \big\}, \]
and let $\nu_{\ell,j}$ be the conditional measure of  $\nu_{\rho_*}^\ell$ on $\Omega_{\ell,j}^d$, \emph{i.e.}\;$\nu_{\ell,j} (\cdot) = \nu_{\rho_*}^\ell (\cdot | \Omega_{\ell,j}^d)$.  Note that $\nu_{\ell,j}$ is independent of the density $\rho_*$. Define
\[m_{\ell,j} (f) = E_{\nu_{\rho_*}^\ell} [f_\ell \mathbf{1}_{\{\bar{\eta}^\ell (0)= j / (2\ell+1)^d\}}], \quad f_{\ell,j} = \frac{f_\ell}{E_{\nu_{\ell,j}} [f_\ell]}. \]
Then,
\[E_{\nu_{\rho_*}^\ell} \Big[  V_{g,M}^\ell (\eta) f_\ell (\eta)  \Big] = \sum_{j=0}^{M (2\ell+1)^d} m_{\ell,j} (f) E_{\nu_{\ell,j}} [V_{g,M}^\ell f_{\ell,j}].\]
Moreover, 
\[D_\ell (f_\ell;\nu_{\rho_*}^\ell) = \sum_{j=0}^{\infty} m_{\ell,j} (f) D_\ell (f_{\ell,j};\nu_{\ell,j}) \geq \sum_{j=0}^{M (2\ell+1)^d} m_{\ell,j} (f) D_\ell (f_{\ell,j};\nu_{\ell,j}).\]
Since $f_\ell$ is density with respect to $\nu_{\rho_*}^\ell$,
\[\sum_{j=0}^{M (2\ell+1)^d} m_{\ell,j} (f)  \leq E_{\nu^\ell_{\rho_*}} [f_\ell] = 1.\]
Then, we may bound \eqref{oneblock4} by
\begin{equation}\label{oneblock5}
t \sup_{f} \sup_{0 \leq j \leq (2\ell+1)^d M}  \Big\{ \frac{\|F\|_\infty}{N^\alpha} E_{\nu_{\ell,j}} [V_{g,M}^\ell f_{\ell,j}] - \frac{N^d}{\gamma (2\ell+1)^d} D_\ell (f_{\ell,j};\nu_{\ell,j}) \Big\}
\end{equation}

Note that $V_{g,M}^\ell$ is not mean zero with respect to the measure $\nu_{\ell,j}$. In order to make it centralized, define
\begin{align*}
U_{g,M}^{\ell,j} :=& \;\bar{g}^\ell(\eta(0)) \mathbf{1}_{\{\bar{\eta}^\ell (0) \leq M\}}- E_{\nu_{\ell,j}} \big[ \bar{g}^\ell(\eta(0)) \mathbf{1}_{\{\bar{\eta}^\ell (0) \leq M\}} \big]\\
=&  \;\bar{g}^\ell(\eta(0)) \mathbf{1}_{\{\bar{\eta}^\ell (0) \leq M\}}- E_{\nu_{\ell,j}} \big[ g(\eta(0))\big].
\end{align*}
The last identity holds because $j \leq M (2\ell+1)^d$. We may rewrite $V_{g,M}^\ell$ as 
\[V_{g,M}^\ell = U_{g,M}^{\ell,j} + E_{\nu_{\ell,j}} \big[ g(\eta(0))\big] - \Phi (\bar{\eta}^\ell (0)) \mathbf{1}_{\{\bar{\eta}^\ell (0) \leq M\}}.\] 
Let $\lambda_g^{\ell,j} (-\mathscr{L}_\ell^s)$ be the largest eigenvalue associated to the generator $-\mathscr{L}_\ell^s + \gamma (2\ell+1)^d \|F\|_\infty U_{g,M}^{\ell,j} / N^{d+\alpha}$, 
Then, \eqref{oneblock5} is bounded by
\begin{equation}\label{oneblock6}
\frac{t \|F\|_\infty}{N^\alpha}  \sup_{0 \leq j \leq (2\ell+1)^d M } \big\{ E_{\nu_{\ell,j}} [g(\eta(0))] - \Phi \big(j/(2\ell+1)^d\big)\big\}+ \frac{t N^d}{\gamma (2\ell+1)^d} \;  \sup_{0 \leq j \leq (2\ell+1)^d M}  \lambda_g^{\ell,j} (-\mathscr{L}_\ell^s).
\end{equation}
By the standard equivalence of ensembles \cite[Corollary A2.1.7]{klscaling}, 
\[ \sup_{0 \leq j \leq (2\ell+1)^d M } \big\{ E_{\nu_{\ell,j}} [g(\eta(0))] - \Phi \big(j/(2\ell+1)^d\big)\big\} \leq C \ell^{-d}.\]
Hence, the first term  in \eqref{oneblock6} is bounded by $C N^{-\alpha} \ell^{-d}$.   In order to bound the second term in \eqref{oneblock6}, we need the following lemma to bound  the largest eigenvalue of the small perturbation of the reversible generator $-\mathscr{L}_\ell^s$.

\begin{lemma}[{\cite[Theorem A3.1.1]{klscaling}}] Let $\mathscr{L}$ be a reversible irreducible generator on some countable state space $\Omega$ with respect to some probability measure $\nu$ on $\Omega$. Let $U$ be a mean zero bounded function with respect to $\nu$. For $\varepsilon > 0$, denote by $\lambda_\varepsilon$ the upper bound of the spectrum of $\mathscr{L}+\varepsilon U$. Then,
	\[0 \leq \lambda_\varepsilon \leq \frac{\varepsilon^2 \kappa}{1-2\|U\|_\infty \varepsilon \kappa} {\rm Var} (U;\nu),\]
	where $\kappa^{-1}$ is the spectral gap of the generator $\mathscr{L}$ in $L^2 (\nu)$,
	\[\kappa^{-1} = \inf_{f:E_{\nu}[f] = 0} \frac{\<f(-\mathscr{L})f\>_{\nu}}{{\rm Var} (f;\nu)}\]
\end{lemma}

We continue to bound the second term in \eqref{oneblock6}. Note that $E_{\nu_{\ell,j}} [U_{g,M}^{\ell,j}] = 0$ and $|U_{g,M}^{\ell,j}| \leq CM$.  By the above lemma, the second term  in \eqref{oneblock6} is bounded by
\[\sup_{0 \leq j \leq (2\ell+1)^d M } \Big\{  \frac{C(F,t) N^d}{\gamma \ell^d} \frac{\gamma^2 \ell^{2d} N^{-2d-2\alpha}  \kappa}{1-C(F) M \gamma \ell^d N^{-d-\alpha} \kappa} {\rm Var} \big( U_{g,M}^{\ell,j}; \nu_{\ell,j} \big) \Big\} ,\]
where $\kappa = \kappa (\ell,j)$ is the reciprocal of the spectral gap of the generator  $\mathscr{L}_\ell^s$,
\[\kappa^{-1} = \inf_{f} \frac{\<f(-\mathscr{L}_\ell^s)f\>_{\nu_{\ell,j}}}{{\rm Var} (f;\nu_{\ell,j})}.\]
Above, the infimum is over all functions $f$ such that $E_{\nu_{\ell,j}} [f] = 0$.  In \cite{landim1996spectral}, Landim \emph{et al.} prove the following lower bound for the spectral gap of the zero range process under Assumption \ref{assump:g},  \[\kappa (\ell,j)^{-1} \geq C \ell^{-2}\] 
uniformly in $j$. By \cite[Lemma 4]{toth2002between},  
\[\sup_{0 \leq j \leq (2\ell+1)^dM} {\rm Var} \big( U_{g,M}^{\ell,j}; \nu_{\ell,j} \big) \leq C \ell^{-d}.\]
Therefore, the second term in \eqref{oneblock6} is bounded by $C (M,F,t)\gamma \ell^{2} N^{-d-2\alpha}$ as long as $\gamma M \ell^{d+2} < \varepsilon_0 N^{d+\alpha}$ for some fixed but small $\varepsilon_0$.  Adding up the above estimates \eqref{oneblock1} and \eqref{oneblock6}, we bound the left hand side in \eqref{ob} by
\[C (M,F,t) \Big( \frac{1}{N^\alpha \ell^d} + \frac{\gamma \ell^2}{N^{d+2\alpha}} + \frac{1}{\gamma N^{2\alpha}} \Big).\]
Take $\gamma = N^{d/2} \ell^{-1}$, we get the desired bound
\[C (M,F,t) \Big( \frac{1}{N^\alpha \ell^d} + \frac{ \ell}{N^{2\alpha+d/2}}\Big)\]
as long as $M \ell^{d+1} < \varepsilon_0 N^{d/2 + \alpha}$, which concludes the proof.
\end{proof}

\subsection{Estimates by logarithmic Sobolev inequality.}   In this subsection, we present a second proof to bound the second term in \eqref{oneblock0} by using the logarithmic Sobolev inequality for the zero range process.  

\begin{lemma}\label{lem:lsi}
	There exists $C = C(M,F,t)$ such that
	\begin{equation}\label{lsi}
		\int_0^t E_{\mu_{N,s}} \Big[ \frac{1}{N^{d+\alpha}} \sum_{y\in\T_N^d} F \big( s,\tfrac{y}{N}\big) \tau_y V_{g,M}^\ell  \Big] ds \leq C \Big( \frac{1}{N^\alpha \ell^d} + \frac{ \ell^{d+2}}{N^{3\alpha+2}}\Big).
	\end{equation}
In particular, by taking $\ell = N^{\frac{\alpha+1}{d+1}}$, the left hand side in \eqref{lsi} is bounded by $C N^{-\frac{(2d+1)\alpha+d}{d+1}}$, which has order $o(N^{-2\alpha})$.
\end{lemma}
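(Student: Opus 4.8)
The plan is to run the entropy method of Lemma~\ref{lem:spectralgap}, but to estimate the block average through the logarithmic Sobolev inequality instead of the spectral gap, and --- the point that makes things simpler here --- to feed the time-integrated Dirichlet form bound of Lemma~\ref{lem:entropy} in directly, without going through the Feynman--Kac formula. Write $f_s=d\mu_{N,s}/d\nu_{\rho_*}^N$. For fixed $s$ and $y\in\T_N^d$, since $\tau_y V_{g,M}^\ell$ depends only on the configuration in the box $\Lambda_\ell^d+y$, I would replace $f_s$ by its conditional expectation on that box, identify the latter (after translation) with a density on $\Omega_\ell^d$, and decompose it on the hyperplanes of fixed particle number $j$, exactly as in the reduction leading to \eqref{oneblock5}. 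Because $V_{g,M}^\ell$ vanishes on $\{\bar\eta^\ell(0)>M\}$, only $j\le M(2\ell+1)^d$ contribute, and on such a hyperplane $V_{g,M}^\ell=U_{g,M}^{\ell,j}+\big(E_{\nu_{\ell,j}}[g(\eta(0))]-\Phi(j/(2\ell+1)^d)\big)$, where the constant is $\Ocal(\ell^{-d})$ by the equivalence of ensembles \cite[Corollary~A2.1.7]{klscaling}, just as used in Lemma~\ref{lem:spectralgap}.

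For the centred piece $U_{g,M}^{\ell,j}$ I would use the entropy inequality on the box: for an auxiliary parameter $\theta>0$ and the conditional density $f_{\ell,j}$,
\[E_{\nu_{\ell,j}}\big[U_{g,M}^{\ell,j}f_{\ell,j}\big]\le\frac1\theta H\big(f_{\ell,j}\nu_{\ell,j}\,\big|\,\nu_{\ell,j}\big)+\frac1\theta\log E_{\nu_{\ell,j}}\big[e^{\theta U_{g,M}^{\ell,j}}\big].\]
The entropy term I would bound by the logarithmic Sobolev inequality for the canonical zero range measure $\nu_{\ell,j}$, whose constant is of order $\ell^2$ uniformly in $j$ under Assumption~\ref{assump:g} (\cite{pra2005logarithmic}), giving $H(f_{\ell,j}\nu_{\ell,j}\,|\,\nu_{\ell,j})\le C\ell^2 D_\ell(f_{\ell,j};\nu_{\ell,j})$. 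For the exponential term, $U_{g,M}^{\ell,j}$ is mean zero, bounded by $CM$, and has $\vari(U_{g,M}^{\ell,j};\nu_{\ell,j})\le C\ell^{-d}$ by \cite[Lemma~4]{toth2002between}; running the computation leading to \eqref{f1} but, since $U_{g,M}^{\ell,j}$ is bounded, estimating $E_{\nu_{\ell,j}}[(U_{g,M}^{\ell,j})^2e^{\theta|U_{g,M}^{\ell,j}|}]\le e^{\theta\|U_{g,M}^{\ell,j}\|_\infty}\vari(U_{g,M}^{\ell,j};\nu_{\ell,j})$ rather than by the Cauchy--Schwarz inequality, one gets $\theta^{-1}\log E_{\nu_{\ell,j}}[e^{\theta U_{g,M}^{\ell,j}}]\le C\theta\ell^{-d}$ as soon as $\theta M$ is below a fixed constant. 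Here is the essential simplification over Lemma~\ref{lem:spectralgap}: I may fix $\theta$ to be a small constant depending only on $M$, so no relation between $\ell$ and $N$ is imposed. Applying the same to $-V_{g,M}^\ell$, this gives $|E_{\nu_{\ell,j}}[V_{g,M}^\ell f_{\ell,j}]|\le C\ell^2 D_\ell(f_{\ell,j};\nu_{\ell,j})+C\ell^{-d}$.

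Reassembling, I would sum over $j$ against the hyperplane weights (which add up to at most $1$), use the identity writing the Dirichlet form of the block projection as their weighted sum together with the convexity of the Dirichlet form under conditioning to bound it by $\sum_{x,x'\in\Lambda_\ell^d+y}D_{x,x'}(f_s;\nu_{\rho_*}^N)$, and then sum over $y$, noting each bond lies in at most $(2\ell+1)^d$ translates of $\Lambda_\ell^d$, so that $\sum_y\sum_{x,x'\in\Lambda_\ell^d+y}D_{x,x'}(f_s;\nu_{\rho_*}^N)\le C\ell^d D_N(f_s;\nu_{\rho_*}^N)$. This yields
\[\int_0^t E_{\mu_{N,s}}\Big[\frac1{N^{d+\alpha}}\sum_{y\in\T_N^d}F\big(s,\tfrac yN\big)\tau_y V_{g,M}^\ell\Big]ds\le\frac{C\ell^{d+2}}{N^{d+\alpha}}\int_0^t D_N(f_s;\nu_{\rho_*}^N)\,ds+\frac{Ct}{N^\alpha\ell^d},\]
and, since Lemma~\ref{lem:entropy} gives $\int_0^t D_N(f_s;\nu_{\rho_*}^N)\,ds\le CN^{d-2-2\alpha}$, the right-hand side is at most $C\big(\ell^{d+2}N^{-3\alpha-2}+N^{-\alpha}\ell^{-d}\big)$, which is \eqref{lsi}. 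With $\ell=N^{(\alpha+1)/(d+1)}$ both terms equal $N^{-\frac{(2d+1)\alpha+d}{d+1}}$, and this is $o(N^{-2\alpha})$ because $\frac{(2d+1)\alpha+d}{d+1}-2\alpha=\frac{d-\alpha}{d+1}>0$ for $d\ge 2$ and $\alpha\in(0,1)$.

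I do not anticipate a genuine obstacle: the argument is a routine variant of the entropy method once its inputs --- the $\Ocal(\ell^2)$ logarithmic Sobolev constant, the $\Ocal(\ell^{-d})$ variance bound, the equivalence of ensembles, and the entropy production bound of Lemma~\ref{lem:entropy} --- are in hand. The one thing to keep track of is the interplay of the three powers of $\ell$: the gain $\ell^{-d}$ from the variance and equivalence-of-ensembles estimates, the loss $\ell^2$ from the logarithmic Sobolev constant, and the loss $\ell^d$ from the overcounting of bonds among the $(2\ell+1)^d$ translates of the box; it is the product $\ell^{d+2}$, paired with the $N^{d-2-2\alpha}$ coming from the entropy production, that decides whether the resulting exponent beats $2\alpha$. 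Once $\theta$ is chosen to be a constant, the rest is mechanical.
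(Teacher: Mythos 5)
Your proposal is correct and follows essentially the same route as the paper: decomposition over the canonical hyperplanes, entropy inequality on each block with the logarithmic Sobolev constant $C\ell^2$ controlling the entropy term, convexity of the Dirichlet form to recover $\ell^d D_N(f_s;\nu_{\rho_*}^N)$, the entropy production bound of Lemma \ref{lem:entropy}, and equivalence of ensembles plus the $\Ocal(\ell^{-d})$ variance bound for the exponential moment. The only differences are cosmetic (you sum over translates of the box and count bond multiplicities where the paper first averages $f_s$ over translations; your fixed $\theta$ plays exactly the role of the paper's choice $\gamma = N^\alpha$), and the bookkeeping and final exponents agree.
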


\begin{remark}
	Compared the estimate in the above lemma with that in Lemma \ref{lem:spectralgap}, we get a better bound by using logarithmic Sobolev inequality only in dimension one. Indeed,
	\[N^{-\frac{(2d+1)\alpha+d}{d+1}} < N^{-\frac{(4d+2)\alpha + d^2}{2(d+1)}}  \quad \text{if and only if} \quad d < 2.\]
\end{remark}

\begin{proof}
	Denote
	\[f_s = f_{N,s} = \frac{d \mu_{N,s}}{d \nu_{\rho_*}^N}, \quad \bar{f}_s = \frac{1}{N^d} \sum_{y \in \T_N^d} \tau_y f_s, \quad  \bar{f}_{s,\ell} = E_{\nu_{\rho_*}^N} [ \bar{f}_s | \mathcal{F}_\ell]. \]
	Then, we may bound the left hand side in \eqref{lsi} by
	\[ \int_0^t E_{\nu_{\rho_*}^N} \Big[ \frac{\|F\|_\infty	}{N^{\alpha}}   V_{g,M}^\ell \bar{f}_s  \Big] ds =  \int_0^t E_{\nu_{\rho_*}^\ell} \Big[ \frac{\|F\|_\infty	}{N^{\alpha}}   V_{g,M}^\ell \bar{f}_{s,\ell}  \Big] ds\]
	As in the proof of Lemma \ref{lem:spectralgap}, we denote
	\[m_{\ell,j} (\bar{f}_{s,\ell}) = E_{\nu_{\rho_*}^\ell} [\bar{f}_{s,\ell} \mathbf{1}_{\{\bar{\eta}^\ell (0)= j / (2\ell+1)^d\}}], \quad\bar{f}_{s,\ell,j}= \frac{\bar{f}_{s,\ell}}{E_{\nu_{\ell,j}} [\bar{f}_{s,\ell}]}. \]
	Then, we rewrite the last formula as 
	\[\sum_{j=0}^{M(2\ell+1)^d}   \int_0^t m_{\ell,j} (\bar{f}_{s,\ell})  E_{\nu_{\ell,j}} \Big[ \frac{\|F\|_\infty	}{N^{\alpha}}   V_{g,M}^\ell \bar{f}_{s,\ell,j}  \Big] ds. \]
	By entropy inequality, for any $\gamma > 0$, the above formula is bounded by 
	\begin{multline}\label{c1}
	\sum_{j=0}^{M(2\ell+1)^d}  \frac{1}{\gamma} \int_0^t  m_{\ell,j} (\bar{f}_{s,\ell}) E_{\nu_{\ell,j}} \big[ \bar{f}_{s,\ell,j} \log \bar{f}_{s,\ell,j} \big] ds \\
	+ \sum_{j=0}^{M(2\ell+1)^d}  \frac{1}{\gamma}  \int_0^t  m_{\ell,j} (\bar{f}_{s,\ell}) \log E_{\nu_{\ell,j}} \Big[ \exp \Big\{ \frac{\gamma \|F\|_\infty}{N^\alpha} V_{g,M}^\ell \Big\}\Big] ds.
	\end{multline}

The logarithmic Sobolev inequality for the zero range process reads (\emph{cf.} \cite{pra2005logarithmic})
\[E_{\nu_{\ell,j}} \big[ \bar{f}_{s,\ell,j} \log \bar{f}_{s,\ell,j} \big] \leq C \ell^2 D_\ell \big(  \bar{f}_{s,\ell,j}; \nu_{\ell,j}\big)\]
uniformly in $j$. By convexity and translation invariance of the Dirichlet form, 
\[\sum_{j=0}^{M(2\ell+1)^d}  m_{\ell,j} (\bar{f}_{s,\ell}) D_\ell \big(  \bar{f}_{s,\ell,j}; \nu_{\ell,j}\big) \leq D_\ell \big(  \bar{f}_{s,\ell}; \nu_{\rho_*}^\ell \big) \leq  D_\ell \big(  \bar{f}_{s}; \nu_{\rho_*}^N \big) \leq \frac{(2\ell+1)^d}{N^d} D_N  \big(  \bar{f}_{s}; \nu_{\rho_*}^N \big).\]
Since by Lemma \ref{lem:entropy},
\[\int_0^t D_N  \big(  \bar{f}_{s}; \nu_{\rho_*}^N \big) ds \leq C N^{d-2\alpha -2},\]
we bound the first term in \eqref{c1} by $C \ell^{d+2} / (\gamma N^{2\alpha+2})$.

Now we bound the second term in \eqref{c1}.  Note that $\big| V_{g,M}^\ell \big| \leq C M$. Using the basic inequality
\[e^{x} \leq 1 + x +\tfrac{x^2}{2} e^{|x|}, \quad \log (1+x) \leq x,\]
we bound the second term in \eqref{c1} by
\[\frac{C (M,F,t) }{\gamma} \sup_{0 \leq j \leq M (2\ell+1)^d} \Big\{ \frac{\gamma}{N^\alpha}E_{\nu_{\ell,j}} [V^\ell_{g,M}] +  \frac{\gamma^2}{N^{2\alpha}}E_{\nu_{\ell,j}} [(V^\ell_{g,M})^2] \Big\}, \quad \forall \gamma \leq N^{\alpha}.\]
Using the standard equivalence of ensembles (\emph{cf.}\;\cite[Corollary A2.1.7]{klscaling}), for any $j \leq M(2\ell+1)^d$,
\[E_{\nu_{\ell,j}} [V^\ell_{g,M}] = E_{\nu_{\ell,j}} [g(\eta(0))] - \Phi (j/(2\ell+1)^d) \leq C \ell^{-d}.\]
By \CS \;and \cite[Lemma 4]{toth2002between}, 
\[E_{\nu_{\ell,j}} [(V^\ell_{g,M})^2] \leq 2 \big(E_{\nu_{\ell,j}} [g(\eta(0))] - \Phi (j/(2\ell+1)^d)\big)^2 + 2 {\rm Var} \big( \bar{g}^\ell (0); \nu_{\ell,j}\big) \leq C \ell^{-d}.\]
This permits us to bound the second term in \eqref{c1} by $C (M,F,t) \Big(N^{-\alpha} \ell^{-d} + \gamma N^{-2\alpha} \ell^{-d}\Big)$.

In conclusion, for any $\gamma \leq N^{\alpha}$, we bound  the left hand side in \eqref{lsi} by
\[C (M,F,t) \Big( \frac{1}{N^\alpha \ell^d} + \frac{\ell^{d+2}}{\gamma N^{2\alpha+2}} + \frac{\gamma }{N^{2\alpha} \ell^d}\Big).\]
We finish the proof by taking $\gamma = N^\alpha$.
\end{proof}

\bibliographystyle{plain}
\bibliography{zhaoreference.bib}

\begin{thebibliography}{10}

\bibitem{Andjel82}
E.~D. Andjel.
\newblock Invariant measures for the zero range process.
\newblock {\em The Annals of Probability}, 10(3):525--547, 1982.

\bibitem{benois2002hydrodynamics}
O~Benois, R~Esposito, R~Marra, and M~Mourragui.
\newblock Hydrodynamics of a driven lattice gas with open boundaries: the
  asymmetric simple exclusion.
\newblock {\em Markov Process. Related Fields}, 10(1):89--112, 2004.

\bibitem{benois1997diffusive}
O~Benois, A~Koukkous, and C~Landim.
\newblock Diffusive behavior of asymmetric zero-range processes.
\newblock {\em Journal of statistical physics}, 87(3):577--591, 1997.

\bibitem{esposito1994diffusive}
R.~Esposito, R.~Marra, and H.~T. Yau.
\newblock Diffusive limit of asymmetric simple exclusion.
\newblock {\em Reviews in Mathematical physics}, 6(05a):1233--1267, 1994.

\bibitem{fritz2004derivation}
J.~Fritz and B.~T{\'o}th.
\newblock Derivation of the leroux system as the hydrodynamic limit of a
  two-component lattice gas.
\newblock {\em Communications in mathematical physics}, 249(1):1--27, 2004.

\bibitem{jara2021viscous}
M.~Jara, C.~Landim, and K.~Tsunoda.
\newblock Derivation of viscous burgers equations from weakly asymmetric
  exclusion processes.
\newblock In {\em Annales de l'Institut Henri Poincar{\'e}, Probabilit{\'e}s et
  Statistiques}, volume~57, pages 169--194. Institut Henri Poincar{\'e}, 2021.

\bibitem{klscaling}
C.~Kipnis and C.~Landim.
\newblock {\em Scaling limits of interacting particle systems}, volume 320.
\newblock Springer Science \& Business Media, 2013.

\bibitem{landim1996spectral}
C.~Landim, S.~Sethuraman, and S.~R.~S. Varadhan.
\newblock Spectral gap for zero-range dynamics.
\newblock {\em The Annals of Probability}, pages 1871--1902, 1996.

\bibitem{landim2004hydrodynamic}
C~Landim, M~Sued, and G~Valle.
\newblock Hydrodynamic limit of asymmetric exclusion processes under diffusive
  scaling in $d \geq 3$.
\newblock {\em Communications in mathematical physics}, 249(2):215--247, 2004.

\bibitem{morris2006spectral}
B.~Morris.
\newblock Spectral gap for the zero range process with constant rate.
\newblock {\em The Annals of Probability}, 34(5):1645--1664, 2006.

\bibitem{nagahata2010spectral}
Yukio Nagahata.
\newblock Spectral gap for zero-range processes with jump rate $g (x)=
  x^\gamma$.
\newblock {\em Stochastic processes and their applications}, 120(6):949--958,
  2010.

\bibitem{pra2005logarithmic}
P.~D. Pra and G.~Posta.
\newblock Logarithmic sobolev inequality for zero-range dynamics.
\newblock {\em Annals of probability}, pages 2355--2401, 2005.

\bibitem{rezakhanlou91}
F.~Rezakhanlou.
\newblock Hydrodynamic limit for attractive particle systems on
  $\mathbb{Z}^{d}$.
\newblock {\em Communications in mathematical physics}, 140(3):417--448, 1991.

\bibitem{seppalainen2001perturbation}
T.~Seppalainen.
\newblock Perturbation of the equilibrium for a totally asymmetric stick
  process in one dimension.
\newblock {\em Annals of probability}, pages 176--204, 2001.

\bibitem{toth2002between}
B.~T{\'o}th and B.~Valk{\'o}.
\newblock Between equilibrium fluctuations and eulerian scaling: perturbation
  of equilibrium for a class of deposition models.
\newblock {\em Journal of Statistical Physics}, 109(1):177--205, 2002.

\bibitem{toth2005perturbation}
B.~T{\'o}th and B.~Valk{\'o}.
\newblock Perturbation of singular equilibria of hyperbolic two-component
  systems: a universal hydrodynamic limit.
\newblock {\em Communications in mathematical physics}, 256(1):111--157, 2005.

\bibitem{valko2006hydrodynamic}
B.~Valk{\'o}.
\newblock Hydrodynamic limit for perturbation of a hyperbolic equilibrium point
  in two-component systems.
\newblock In {\em Annales de l'IHP Probabilit{\'e}s et statistiques},
  volume~42, pages 61--80, 2006.

\bibitem{xu2022equilibrium}
L.~Xu and L.~Zhao.
\newblock Equilibrium perturbations for stochastic interacting systems.
\newblock {\em arXiv preprint arXiv:2206.01959}, 2022.

\bibitem{yau1991relative}
H.~T. Yau.
\newblock Relative entropy and hydrodynamics of {Ginzburg-Landau} models.
\newblock {\em Letters in Mathematical Physics}, 22(1):63--80, 1991.

\end{thebibliography}

\vspace{1em}
\noindent{\large Linjie \textsc{Zhao}}

\vspace{0.5em}
\noindent School of Mathematics and Statistics, Huazhong University of Science \& Technology, Wuhan, China.\\
{\tt linjie\_zhao@hust.edu.cn}
\end{document}